\newcommand{\vx}{\vct{x}}
\newcommand{\vu}{\vct{u}}
\newcommand{\vvu}{\vvct{u}}
\newcommand{\vvv}{\vvct{v}}
\newcommand{\vw}{\vct{w}}
\newcommand{\valpha}{\boldsymbol\alpha}
\newcommand{\vkappa}{\boldsymbol\kappa}
\newcommand{\tsigma}{\boldsymbol{\sigma}}
\newcommand{\ttau}{\boldsymbol{\tau}}
\newcommand{\im}{\hat{\imath}}
\newcommand{\vct}[1]{\mathbf{#1}}
\newcommand{\vvct}[1]{\underline{#1}}
\newcommand{\ten}[1]{\mathbf{#1}}
\newcommand{\ccref}[1]{(\ref{#1})}
\newtheorem{thm}{Theorem}[section]
\newtheorem{lem}[thm]{Lemma}
\title[Efficient quadrature rules for mass-lumped tetrahedra]{Efficient quadrature rules for computing the stiffness matrices of mass-lumped tetrahedral elements for linear wave problems*}
\author{S. Geevers$^1$, W.A. Mulder$^{2,3}$ \and J.J.W. van der Vegt$^1$}
\address{1. Department of Applied Mathematics, University of Twente, Enschede, the Netherlands}
\address{2. Shell Global Solutions International BV}
\address{3. Delft University of Technology}
\thanks{*This work was funded by the Shell Global Solutions International B.V. under contract no.~PT45999.}
\begin{document}

\maketitle

\begin{abstract}
We present new and efficient quadrature rules for computing the stiffness matrices of mass-lumped tetrahedral elements for wave propagation modelling. These quadrature rules allow for a more efficient implementation of the mass-lumped finite element method and can handle materials that are heterogeneous within the element without loss of the convergence rate. The quadrature rules are designed for the specific function spaces of recently developed mass-lumped tetrahedra, which consist of standard polynomial function spaces enriched with higher-degree bubble functions. For the degree-2 mass-lumped tetrahedron, the most efficient quadrature rule seems to be an existing 14-point quadrature rule, but for tetrahedra of degrees 3 and 4, we construct new quadrature rules that require less integration points than those currently available in the literature. Several numerical examples confirm that this approach is more efficient than computing the stiffness matrix exactly and that an optimal order of convergence is maintained, even when material properties vary within the element.
\end{abstract}

\section{Introduction}
Mass-lumped tetrahedral element methods are efficient methods for solving linear wave equations, such as the acoustic wave equation, the elastic wave equations, or Maxwell's equations, on complex 3D domains with sharp material interfaces \cite{zhebel14}. They offer the same convergence rate and geometric flexibility as standard continuous tetrahedral element methods, but also allow for explicit time-stepping due to a diagonal mass matrix. 

To obtain mass-lumped elements, Lagrangian basis functions are combined with an inexact quadrature rule for computing the mass matrix. A lumped matrix is obtained when the quadrature points coincide with the basis function nodes. For quadrilateral and hexahedral elements, mass-lumping is achieved using tensor-product basis functions and Gauss--Lobatto points, resulting in the well-known spectral element method \cite{patera84, seriani94, komatitsch98}. For linear triangular and tetrahedral elements, mass-lumping is achieved with standard Lagrangian basis functions and a Newton--Cotes integration rule. For higher-degree triangular and tetrahedral elements, however, the element space needs to be enriched with higher-degree bubble functions in order to maintain stability and an optimal order of convergence \cite{cohen02}. So far, mass-lumped triangular elements of degree 2 and 3 \cite{cohen95,cohen01}, 4 \cite{mulder96}, 5 \cite{chin99}, 6 \cite{mulder13}, and 7-9 \cite{liu17, cui17} have been found. The first higher-degree tetrahedral elements were presented in \cite{mulder96} for degree 2 and in \cite{chin99} for degree 3. Recently, we presented new mass-lumped tetrahedral elements of degrees 2 to 4 in \cite{geevers18b}. The new degree-2 and degree-3 elements require significantly less nodes than the earlier versions, while mass-lumped tetrahedra of degree 4 had not been found before. Because of the reduced number of nodes, these new mass-lumped elements are also much more efficient than the earlier versions \cite{geevers18b} and are therefore more suitable for large-scale 3D simulations.

A question that remains is how to efficiently compute the stiffness matrix for these elements. When the material parameters are piecewise constant, the stiffness matrix can be evaluated exactly \cite{mulder16}. Alternatively, we can use a quadrature rule to approximate the stiffness matrix. This latter approach can significantly reduce the number of computations as we will demonstrate in Section \ref{sec:tests}. Moreover, it also allows us to handle material parameters that vary within the element without loss of convergence rate as we will prove in Section \ref{sec:accuracy}. 

Finding an efficient quadrature rule for the stiffness matrix for mass-lumped tetrahedra is not straightforward. For hexahedral elements, the stiffness matrix can be approximated with the same Gauss--Lobatto quadrature rule that is used for the mass matrix, but for mass-lumped tetrahedra, using the quadrature rule of the mass matrix to also evaluate the stiffness matrix turns out to be inefficient or inaccurate. In this paper, we therefore present new and efficient quadrature rules for computing the stiffness matrices for mass-lumped tetrahedra.

To obtain these quadrature rules, we show that the quadrature rule only needs to be exact for functions in the space $\mathcal{P}_{p-1}\otimes D\tilde U$, where $p\geq 2$ denotes the degree of the element, $\mathcal{P}_{p-1}$ denotes the space of polynomials up to degree $p-1$, and $D\tilde U$ denotes the space of partial derivatives of functions in the element space. Since the mass-lumped tetrahedra contain higher-degree bubble functions, so does the space $\mathcal{P}_{p-1}\otimes D\tilde U$ that needs to be integrated exactly. Most quadrature rules in literature, however, are designed to be exact for spaces of the form $\mathcal{P}_k$. Such quadrature rules for tetrahedral domains can be found in, for example, \cite{stroud71,grundmann78,keast86,cools03,zhang09,witherden15} and the references therein. We could choose $k$ equal to the highest polynomial degree that appears in $\mathcal{P}_{p-1}\otimes D\tilde U$, but the resulting number of quadrature points may then be suboptimal. Instead, we try to find quadrature rules that are exact for $\mathcal{P}_{p-1}\otimes D\tilde U$ with a minimal number of quadrature points.

For the degree-2 tetrahedral element, the most efficient quadrature rule still seems to be the 14-point rule of \cite{grundmann78} that is accurate for polynomials up to degree 5. For the degree-3 element and the three degree-4 elements of 60, 61, and 65 nodes, however, we present new quadrature rules that require 21, 51, 60, and 60 points, respectively, while using a quadrature rule from the current literature would require 24 \cite{keast86}, 59 \cite{zhang09}, 79 \cite{zhang09}, and 79 \cite{zhang09} points.

This paper is organised as follows. In Section \ref{sec:method}, we introduce the mass-lumped finite element method, and in Section \ref{sec:quadRules}, we present our new quadrature rules for evaluating the stiffness matrix. We prove in Section \ref{sec:accuracy} that the conditions used to obtain our quadrature rules result in optimal convergence rates. In Section \ref{sec:disp}, we analyze and compare the dispersion properties and resulting time step size for our new quadrature rules and several other rules available in the literature. In Section \ref{sec:tests}, we show numerical examples demonstrating that using our numerical quadrature rules for evaluating the stiffness matrix is more efficient than evaluating the integrals exactly and that the convergence rate is not lost when material parameters vary within the elements. Finally, we summarise our main conclusions in Section \ref{sec:conclusion}.

\section{The mass-lumped finite element method}
\label{sec:method}
To present and analyze the mass-lumped finite element method, we consider the scalar wave equation given by
\begin{subequations}
\begin{align}
\rho\partial_t^2u &= \nabla\cdot c\nabla u + f &&\text{in }\Omega\times(0,T),  \label{eq:modela}\\
u&= 0 &&\text{on }\partial\Omega\times(0,T), \\
u|_{t=0} &= u_0 &&\text{in }\Omega, \\
\partial_tu|_{t=0} &=v_0 &&\text{in }\Omega,
\end{align}%
\label{eq:model}%
\end{subequations}
where $\Omega\subset\mathbb{R}^3$ is the spatial domain, $(0,T)$ is the time domain, $u:\Omega\times(0,T)\rightarrow\mathbb{R}$ is the scalar field that needs to be solved, $\nabla$ is the gradient operator, $f:\Omega\times(0,T)\rightarrow\mathbb{R}$ is the source term, $u_0,v_0:\Omega\rightarrow\mathbb{R}$ are the initial values, and $\rho,c:\Omega\rightarrow\mathbb{R}^+$ are positive spatial parameters. The spatial domain $\Omega$ is assumed to be a bounded open domain with Lipschitz boundary $\partial\Omega$, and the parameters $\rho$ and $c$ are assumed to be bounded by $\rho_0\leq \rho\leq \rho_1$ and $c_0\leq c\leq c_1$, with $\rho_0,\rho_1,c_0,c_1$ strictly positive constants.

To solve the scalar wave equation with a finite element method, we consider the weak formulation. Let $L^2(\Omega)$ denote the standard Lesbesque space of square-integrable functions on $\Omega$, $H^1_0(\Omega)$ the standard Sobolev space of functions in $L^2(\Omega)$ that vanish on $\partial\Omega$ and have square-integrable weak derivatives, and $L^2(0,T;U)$, with $U$ a Banach space, the Bochner space of functions $f:(0,T)\rightarrow U$ such that $\|f\|_U$ is square integrable on $(0,T)$. Assume $u_0\in H^1_0(\Omega)$, $v_0\in L^2(\Omega)$, and $f\in L^2(0,T;L^2(\Omega))$. The weak formulation of \ccref{eq:model} can then be written as finding $u\in L^2\big(0,T;H^1_0(\Omega)\big)$, with $\partial_tu\in L^2\big(0,T;L^2(\Omega)\big)$ and $\partial_t(\rho\partial_tu)\in L^2\big(0,T;H^{-1}(\Omega)\big)$, such that $u|_{t=0}=u_0$, $\partial_tu|_{t=0}=v_0$, and
\begin{align}
\langle \partial_t(\rho\partial_tu), w\rangle + (c\nabla u,\nabla w) &= (f,w) &&\text{for all }w\in H_0^1(\Omega),\text{ a.e. }t\in(0,T),
\label{eq:WF}
\end{align}
where $(\cdot,\cdot)$ denotes the standard $L^2(\Omega)$ inner-product and $\langle\cdot,\cdot\rangle$ denotes the pairing between $H^{-1}(\Omega)$ and $H^1_0(\Omega)$.

This weak form of the wave equation can be solved with the mass-lumped finite element method, which consists of the following components:
\begin{itemize}
  \item a tetrahedral mesh $\mathcal{T}_h$, where $h$ denotes the radius of the smallest sphere that can contain each element,
  \item a reference tetrahedron $\tilde e$ with reference space $\tilde U=\mathcal{P}_p\oplus\tilde U^+:=\{u\;|\; u=w+u^+ \text{ for some }w\in\mathcal{P}_p, u^+\in\tilde{U}^+\}$, where $\mathcal{P}_p$ denotes the space of polynomials of degree $p$ or less and $\tilde{U}^+$ a space of higher-degree face and interior bubble functions,
  \item a set of reference nodes $\tilde{\mathcal Q}$ that can be used for both interpolation and quadrature on $\tilde e$,
  \item a set of quadrature weights $\{\tilde \omega_{\tilde\vx}\}_{\tilde\vx\in\tilde{\mathcal{Q}}}$.
\end{itemize}

Using these components, a finite element space can be constructed of the form 
\begin{align*}
U_h=H_0^1(\Omega)\cap U(\mathcal{T}_h,\tilde U),
\end{align*}
where
\begin{align*}
U(\mathcal{T}_h,\tilde U) &:= \{u\in H^1(\Omega) \;|\; u\circ\phi_e \in \tilde U \text{ for all }e\in\mathcal{T}_h\},
\end{align*}
with $\phi_e:\tilde e\rightarrow e$ the reference-to-physical element mapping. The interpolation points are given by $\mathcal{Q}_h=\mathcal{Q}(\mathcal{T}_h, \tilde{\mathcal{Q}})$, where
\begin{align*}
\mathcal{Q}(\mathcal{T}_h,\tilde{\mathcal{Q}}) &:= \bigcup_{e\in\mathcal{T}_h} \bigcup_{\tilde\vx\in\tilde{\mathcal{Q}}} \phi_e(\tilde\vx),
\end{align*} 
and the $L^2(\Omega)$ inner-product is approximated by
\begin{align*}
(u,w) = \sum_{e\in\mathcal{T}_h} \frac{|e|}{|\tilde e|} \int_{\tilde e} \tilde{u}_e\tilde{w}_e \;d\tilde x 
&\approx  \sum_{e\in\mathcal{T}_h} \sum_{\tilde\vx\in\tilde{\mathcal{Q}}} \frac{|e|}{|\tilde e|} \omega_{\tilde\vx} \tilde{u}_e(\tilde\vx)\tilde{w}_e(\tilde\vx) =:(u,w)_{\mathcal{Q}_{h}},
\end{align*}
with $|e|$ and $|\tilde e|$ the volume of $e$ and $\tilde e$, respectively, and $\tilde{u}_e:=u\circ\phi_e$, $\tilde{w}_e:=w\circ\phi_e$.

Assume $u_0,v_0,\rho,c\in\mathcal{C}^0(\overline\Omega)$ and $f:\mathcal{C}^0(\overline\Omega\times[0,T])$ are all continuous. The finite element method can then be formulated as finding $u_h:[0,T]\rightarrow U_h$ such that $u_h|_{t=0}=I_hu_0$, $\partial_tu_h|_{t=0}=I_hv_0$, and 
\begin{align}
(\rho\partial_t^2u_h,w)_{\mathcal{Q}_h} + (c\nabla u_h,\nabla w) &= (f,w)_{\mathcal{Q}_h} &&\text{for all }w\in U_h, t\in[0,T],
\label{eq:FEM}
\end{align}
where $I_h$ is the interpolation operator that interpolates a continuous function at the points $\mathcal{Q}_h$ by a function in $U(\mathcal{T}_h,\tilde U)$.

Now let $\{\vx_i\}_{i=1}^N$ be the set of all interpolation points $\mathcal{Q}_h$ that do not lie on the boundary $\partial\Omega$, and define nodal basis functions $\{w_i\}_{i=1}^N$ such that $w_i(\vx_j)=\delta_{ij}$ for all $i,j=1,\dots,N$, with $\delta$ the Kronecker delta. Also define, for any continuous function $u\in \mathcal{C}(\overline\Omega)$, the interpolation vector $\vvu\in\mathbb{R}^N$ such that $\vvu_i:=u(\vx_i)$ for all $i=1,\dots,N$. The finite element method can then be formulated as finding $\vvu_h:[0,T]\rightarrow\mathbb{R}^N$ such that $\vvu_h|_{t=0}=\underline{u_0}$, $\partial_t\vvu_h|_{t=0}=\underline{v_0}$, and
\begin{align}
\partial_t^2\vvu_h + M^{-1}A\vvu_h &= \vvct{\rho^{-1}f} &&\text{for all }t\in[0,T].
\label{eq:ODE}
\end{align}
Here, $M\in\mathbb{R}^{N\times N}$, with $M_{ij}:=(\rho w_i,w_j)_{\mathcal{Q}_h}$, is the mass matrix, and $A\in\mathbb{R}^{N\times N}$, with $A_{ij}:=(c\nabla w_i,\nabla w_j)$, is the stiffness matrix.

Since the interpolation points and quadrature points coincide, the mass matrix is diagonal with entries $M_{ii} = (\rho w_i,1)_{\mathcal{Q}_h}$. Therefore, we can efficiently solve the system of ODE's in \ccref{eq:ODE} using an explicit time-stepping scheme. Standard conforming finite element methods do not result in a (block)-diagonal mass matrix and are therefore less suitable for solving wave equations on large three-dimensional meshes.

To remain accurate and stable, the mass-lumped finite element method needs to satisfy the following conditions \cite{geevers18b}:
\begin{enumerate}
  \item[C1] (Unisolvent). The space $\tilde U$ is unisolvent on the nodes $\tilde{\mathcal{Q}}$.
  \item[C2] (Symmetry). The space $\tilde U$ and the set $\tilde{\mathcal{Q}}$ are invariant to affine mappings that map $\tilde e$ onto itself.
  \item[C3] (Face-conforming). If $\tilde u\in\tilde U$ is zero at all nodes in $\tilde{\mathcal{Q}}\cap\tilde f$, with $\tilde f$ a reference face, then $\tilde u$ is zero on $\tilde f$.
  \item[C4] (Positivity). The weights $\{\tilde \omega_{\tilde\vx}\}_{\tilde\vx\in\tilde{\mathcal{Q}}}$ are all strictly positive.
  \item[C5] (Accuracy). The quadrature rule is exact for functions in $\mathcal{P}_{p-2}\otimes\tilde U$ when $p\geq 2$.
\end{enumerate}
The first three conditions are necessary to guarantee that the global basis functions are well-defined and continuous. The last two conditions are necessary for stability and for maintaining an optimal order of convergence.

When $p\geq 2$, these conditions can not all be met for standard polynomial spaces $\tilde U=\mathcal{P}_p$. Therefore, the element space needs to be enriched with higher-degree bubble functions. We will focus on the mass-lumped tetrahedral elements recently presented in \cite{geevers18b}. An overview of these elements is given in Table \ref{tab:MLtet}. There, $n$ denotes the dimension of $\tilde U$, which is equal to the number of nodes per element, $B_f:=\mathrm{span}\{x_1x_2x_3,x_1x_2x_4,x_1x_3x_4,x_2x_3x_4\}$ denotes the span of the four face bubble functions, and $B_e:=\mathrm{span}\{x_1x_2x_3x_4\}$ denotes the span of the element bubble function, with $x_1$, $x_2$, $x_3$, $x_4$ the four barycentric coordinates. We also used the notation $UV:=U\otimes V:=\{w\;|\;w=uv \text{ for some }u\in U, v\in V\}$ for any two function spaces $U,V$.

\begin{table}[h]
\caption{Overview of mass-lumped tetrahedra.}
\label{tab:MLtet}
\begin{center}
\begin{tabular}{c| c| l}
$p$			& $n$ 	& $\tilde U$		\\ \hline\hline
2			& $15$ 	& $\mathcal{P}_2\oplus B_f\oplus B_e$ \\ \hline
3			& $32$ 	& $\mathcal{P}_3\oplus B_f\mathcal{P}_1\oplus B_e\mathcal{P}_1$ \\ \hline
4			& $60$	& $\mathcal{P}_4\oplus B_f\mathcal{P}_2 \oplus B_e(\mathcal{P}_2+B_f)$ \\ 
 			& $61$ 	& $\mathcal{P}_4\oplus B_f\mathcal{P}_2 \oplus B_e(\mathcal{P}_2+B_f+B_e)$ \\ 
 			& $65$ 	& $\mathcal{P}_4\oplus B_f(\mathcal{P}_2+B_f) \oplus B_e(\mathcal{P}_2+B_f+B_e)$ 
\end{tabular}
\end{center}
\end{table}

To apply these elements more efficiently, we also approximate the $L^2$ inner-product for the stiffness matrix, $(c\nabla u,\nabla v)$, with a quadrature rule. This also allows us to handle material parameters $c$ that vary within the element. It turns out that it is more efficient and sometimes even necessary to compute the stiffness matrix with a different quadrature rule than for the mass matrix. We will denote the quadrature points and weights for the stiffness matrix by $\tilde{\mathcal{Q}}'$ and $\{\tilde \omega'_{\tilde\vx}\}_{\tilde\vx\in\tilde{\mathcal{Q}}'}$, respectively, and denote the corresponding approximated $L^2(\Omega)$-product by $(\cdot,\cdot)_{\mathcal{Q}'_h}$.

The resulting finite element method remains stable and accurate if the following conditions are also satisfied:
\begin{enumerate}
  \item[C6] (Positivity). The weights $\{\tilde \omega_{\tilde\vx}'\}_{\tilde\vx\in\tilde{\mathcal{Q}}'}$ are all strictly positive.
  \item[C7] (Spurious-free). There is no function $\tilde u\in\tilde U$ with zero gradient $\tilde\nabla\tilde u=\vct{0}$ on all quadrature points $\tilde{\mathcal{Q}}'$ except the constant function. In case of linear elasticity, there is no function $\tilde{\vct{u}}\in \tilde{U}^3$ with zero strain $\tilde\nabla\tilde{\vct{u}} + \tilde\nabla\tilde{\vct{u}}^t = \ten{0}$ on all quadrature points $\tilde{\mathcal{Q}}'$ except the six rigid motions.
  \item[C8] (Accuracy). If $p\geq 2$, the quadrature rule for the stiffness matrix is exact for functions in $\mathcal{P}_{p-1}\otimes D\tilde U$, where $D \tilde U$ denotes the space of all partial derivatives of all functions in $\tilde U$. 
\end{enumerate}
A proof that these three conditions are sufficient to maintain an optimal order of convergence is given in Section \ref{sec:accuracy}.

We constructed quadrature rules that satisfy these conditions for the specific function spaces of the higher-degree mass-lumped tetrahedra presented in Table \ref{tab:MLtet}. For the degree-2 element, the most efficient quadrature rule seems to be an existing 14-point rule that is fifth-order accurate, but for the higher-degree elements, we obtained new quadrature rules that require less points than those currently available in the literature. An overview of these rules is given in the next section.

\section{Efficient quadrature rules for the stiffness matrix}
\label{sec:quadRules}

To present the quadrature rules for the stiffness matrix, let $\tilde e$ be the reference tetrahedron with vertices at $(0,0,0)$, $(1,0,0)$, $(0,1,0)$, and $(0,0,1)$. The barycentric coordinates of this element are given by the three Cartesian coordinates $x_1$, $x_2$, $x_3$, and the fourth coordinate $x_4:=1-x_1-x_2-x_3$. These coordinates are useful for describing $\mathcal{S}$, the space of affine mappings that map $\tilde e$ onto itself, since any function $s\in\mathcal{S}$ can be defined by a permutation of the barycentric coordinates. In particular, we can write $s(x_1,x_2,x_3)=(x_i,x_j,x_k)$ for some $i,j,k\in\{1,2,3,4\}$, $i\neq j\neq k\neq i$, for any $s\in\mathcal{S}$.

\begin{table}[h]
\caption{Types of quadrature points. The third column shows the number of equivalent points for each type.}
\label{tab:eqClass}
\begin{center}
\begin{tabular}{c ||c |r |p{0.5\textwidth}}
Type		& Points & $\#$ & Description\\ \hline
$[4]$ & $\{(1/4,1/4,1/4)\}$ & 1 & centre of tetrahedron \\
$[3,1]$ & $\{(c,c,c)\}$ & 4 & on line through vertex and centre \\
$[2,2]$ & $\{(d,d,1/2-d)\}$ & 6 & on line through edge-midpoint and centre \\
$[2,1,1]$ & $\{(f_1,f_1,f_2)\}$ & 12 & on plane through centre and two vertices \\
$[1,1,1,1]$ & $\{(g_1,g_2,g_3)\}$ & 24 & arbitrary position
\end{tabular}
\end{center}
\end{table}

\begin{table}[h]
\caption{Quadrature rule of 14 points ($K_4=0$, $K_{31}=2$, $K_{22}=1$, $K_{211}=0$, $K_{1111}=0$) \cite{grundmann78} for the stiffness matrix of the degree-2 15-node tetrahedron.}
\label{tab:ML2n15}
\begin{center}
{\tabulinesep=0.5mm
\begin{tabu}{c r l l}
Nodes					& $\#$ 	& $\omega'$ 	& node parameters  \\ \hline
$\{(c_1,c_1,c_1)\}$			& $4$	& $0.01224884051939366$		& $0.09273525031089123$ \\
$\{(c_2,c_2,c_2)\}$			& $4$	& $0.01878132095300264$		& $0.3108859192633006$ \\ 
$\{(d,d,\frac12-d)\}$			& $6$	& $0.007091003462846911$		& $0.04550370412564965$ \\ \hline
 \multicolumn{4}{c}{$V=\mathcal{P}_5=\{x_1,x_1^2x_2,x_1^3x_2^2, \beta_fx_1,\beta_fx_1x_2,\beta_ex_1 \}$} \\ \hline
\end{tabu}}
\end{center}
\end{table}

\begin{table}[h]
\caption{New quadrature rule of 21 points ($K_4=1$, $K_{31}=2$, $K_{22}=0$, $K_{211}=1$, $K_{1111}=0$) for the stiffness matrix of the degree-3 32-node tetrahedron.}
\label{tab:ML3n32}
\begin{center}
{\tabulinesep=0.5mm
\begin{tabu}{c r l l}
Nodes					& $\#$ 	& $\omega'$ 	& node parameters  \\ \hline
$\{(c_1,c_1,c_1)\}$			& $4$	& $0.008382813462606309$		& $0.08360982293995379$ \\
$\{(c_2,c_2,c_2)\}$			& $4$	& $0.01062803097330636$		& $0.3195556046935656$ \\ 
$\{(f_1,f_1,f_2)\}$			& $12$	& $0.005973459577178217$		& $\begin{bmatrix} 0.06366100187501753 \\ 0.3362519222398494 \end{bmatrix}$ \\ 
$\{(\frac14,\frac14,\frac14)\}$	& $1$	& $0.01894177399687740$		& - \\ \hline
\multicolumn{4}{c}{$V=\mathcal{P}_5\oplus B_f\mathcal{P}_3$}  \\
\multicolumn{4}{c}{$=\{x_1,x_1^2x_2,x_1^3x_2^2, \beta_fx_1,\beta_fx_1^2x_2,\beta_f^2, \beta_ex_1,\beta_ex_1x_2 \}$}  \\  \hline
\end{tabu}}
\end{center}
\end{table}

\begin{table}[h]
\caption{New quadrature rule of 51 points ($K_4=1$, $K_{31}=2$, $K_{22}=1$, $K_{211}=3$, $K_{1111}=0$) for the stiffness matrix of the degree-4 60-node tetrahedron.}
\label{tab:ML4n60}
\begin{center}
{\tabulinesep=0.5mm
\begin{tabu}{c r l l}
Nodes					& $\#$ 	& $\omega'$ 	& node parameters  \\ \hline
$\{(c_1,c_1,c_1)\}$			& $4$	& $0.001076330088382485$		& $0.04010756377220036$ \\
$\{(c_2,c_2,c_2)\}$			& $4$	& $0.006422430307819483$		& $0.1881144601918900$ \\ 
$\{(d,d,\frac12-d)\}$			& $6$	& $0.003859721113202450$		& $0.1124010568611476$ \\ 
$\{(f_{11},f_{11},f_{12})\}$		& $12$	& $0.003162722714222902$		& $\begin{bmatrix} 0.04781990270450464 \\  0.2053222493389064 \end{bmatrix}$ \\ 
$\{(f_{21},f_{21},f_{22})\}$		& $12$	& $0.004715130256124021$		& $\begin{bmatrix} 0.2347999378738287 \\  0.03405863749492695 \end{bmatrix}$ \\ 
$\{(f_{31},f_{31},f_{32})\}$		& $12$	& $0.001320748780834370$		& $\begin{bmatrix} 0.4614535776221135 \\ 0.06693547308143162 \end{bmatrix}$ \\ 
$\{(\frac14,\frac14,\frac14)\}$	& $1$	& $0.003130077388468573$		& - \\ \hline
\multicolumn{4}{c}{$V=\mathcal{P}_7\oplus B_f(\mathcal{P}_5\oplus B_f\mathcal{P}_3) \oplus B_e\mathcal{P}_5$}  \\
\multicolumn{4}{c}{$=\{x_1,x_1^2x_2,x_1^3x_2^2,x_1^4x_2^3, \beta_fx_1,\beta_fx_1^2x_2,\beta_fx_1^3x_2^2, \beta_f^2x_1,\beta_f^2x_1^2x_2,\beta_f^3, \dots$}  \\  
\multicolumn{4}{c}{$\dots,  \beta_ex_1,\beta_ex_1^2x_2,\beta_ex_1^3x_2^2,\beta_e\beta_fx_1,\beta_e\beta_fx_1x_2, \beta_e^2x_1 \}$}  \\  \hline
\end{tabu}}
\end{center}
\end{table}

\begin{table}[h]
\caption{New quadrature rule of 60 points ($K_4=0$, $K_{31}=3$, $K_{22}=2$, $K_{211}=3$, $K_{1111}=0$) for the stiffness matrix of the degree-4 61- and 65-node tetrahedron.}
\label{tab:ML4n65}
\begin{center}
{\tabulinesep=0.5mm
\begin{tabu}{c r l l}
Nodes					& $\#$ 	& $\omega'$ 	& node parameters  \\ \hline
$\{(c_1,c_1,c_1)\}$			& $4$	& $0.001137453809249273$		& $0.04091036488546224$ \\
$\{(c_2,c_2,c_2)\}$			& $4$	& $0.006907244220995018$		& $0.1942594527940223$ \\ 
$\{(c_3,c_3,c_3)\}$			& $4$	& $0.004458749819772567$		& $0.3166409312612929$ \\ 
$\{(d_1,d_1,\frac12-d_1)\}$		& $6$	& $0.001389883779363477$		& $0.02776256108257648$ \\ 
$\{(d_2,d_2,\frac12-d_2)\}$		& $6$	& $0.004236295194116969$		& $0.1022199785693040$ \\ 
$\{(f_{11},f_{11},f_{12})\}$		& $12$	& $0.001788418107829456$		& $\begin{bmatrix} 0.03511432271187172 \\ 0.2097218125202450  \end{bmatrix}$ \\ 
$\{(f_{21},f_{21},f_{22})\}$		& $12$	& $0.003642034272731381$		& $\begin{bmatrix} 0.1790174868402900 \\ 0.03980830656880513  \end{bmatrix}$ \\ 
$\{(f_{31},f_{31},f_{32})\}$		& $12$	& $0.001477531071582210$		& $\begin{bmatrix} 0.4192720711456938 \\ 0.008950317872961031 \end{bmatrix}$ \\ \hline
\multicolumn{4}{c}{$V=\mathcal{P}_8\oplus B_f^2\mathcal{P}_3 \oplus B_e(\mathcal{P}_5\oplus B_f\mathcal{P}_3)$}  \\
\multicolumn{4}{c}{$=\{x_1,x_1^2x_2,x_1^3x_2^2,x_1^4x_2^3,x_1^4x_2^4, \beta_fx_1,\beta_fx_1^2x_2,\beta_fx_1^3x_2^2, \beta_f^2x_1,\beta_f^2x_1^2x_2,\beta_f^3, \dots$}  \\  
\multicolumn{4}{c}{$\dots,  \beta_ex_1,\beta_ex_1^2x_2,\beta_ex_1^3x_2^2,\beta_e\beta_fx_1,\beta_e\beta_fx_1^2x_2,\beta_e\beta_f^2, \beta_e^2x_1, \beta_e^2x_1x_2 \}$}  \\  \hline
\end{tabu}}
\end{center}
\end{table}

Now, let $\{\vx\}$ denote point $\vx$ and all equivalent points $s(\vx)$, with $s\in\mathcal{S}$. The quadrature rule will consist of several equivalence classes $\{\vx\}$ with quadrature weights that are the same within each equivalence class. To give an example of an equivalence class, consider the point $(c_1,c_1,c_1)$. The barycentric coordinates of this point are given by $c_1,c_1,c_1,1-3c_1$, so the equivalence class $\{(c_1,c_1,c_1)\}$ consists of the four points $(c_1,c_1,c_1)$, $(1-3c_1,c_1,c_1)$, $(c_1,1-3c_1,c_1)$, and $(c_1,c_1,1-3c_1)$ when $c_1\neq\frac14$. An overview of the different types of points is given in Table \ref{tab:eqClass}. The configuration of a quadrature rule is given by the numbers $K_{4}$, $K_{31}$, $K_{22}$, $K_{211}$, and $K_{1111}$, which indicate that the quadrature rule has $K_4$ distinct points of type $[4]$, $K_{31}$ points of type $[3,1]$, $K_{22}$ points of type $[2,2]$, etc.

To find a set of points and weights that satisfy accuracy condition C8, we construct a linear basis that spans $V\supset \mathcal{P}_{p-1}\otimes D\tilde U$. We describe this linear basis and linear span using the notation $\{f_1,f_2,\dots,f_k\}$, which denotes the span of the functions $f_1,\dots,f_k$ and all equivalent functions $f_i\circ s$, with $i=1,\dots,k$ and $s\in\mathcal{S}$. To give an example, all equivalent versions of $x_1^2x_2^2x_3x_4$ are given by the six functions $x_1^2x_2^2x_3x_4$, $x_1^2x_2x_3^2x_4$, $x_1^2x_2x_3x_4^2$, $x_1x_2^2x_3^2x_4$, $x_1x_2^2x_3x_4^2$, and $x_1x_2x_3^2x_4^2$, so $\{x_1^2x_2^2x_3x_4\}$ denotes the span of these six functions.

After having constructed a basis $\{f_1,f_2,\dots,f_k\}$ for $V$, we search for a quadrature rule that has a configuration with $k$ parameters. These parameters consist of location parameters and quadrature weights. Because of the symmetry, a quadrature rule that is exact for a function $f$ is exact for all its equivalent functions. Therefore, to satisfy C8, we end up with a nonlinear system of $k$ equations:
\begin{align}
\int_{\tilde e} f_i(\tilde\vx) \;d\tilde x &= \sum_{\tilde\vx\in\tilde{\mathcal{Q}}'} \omega'_{\tilde\vx} f_i(\tilde\vx),  &&i=1,\dots,k.
\label{eq:quadRule}
\end{align}
We obtain solutions of this system using Newton's method for a large number of different initial values and check for each solution if it satisfies C6 and C7. When we cannot find an admissible solution for configurations with $k$ parameters, we increase $k$ and try again. We continue this process until we find a suitable quadrature rule.

The complete algorithm for finding a quadrature rule of $k$ parameters can be summarised as follows:
\begin{enumerate}
  \item[1.] Construct basis functions $f_1,\dots,f_k$, such that $\{f_1,f_2,\dots, f_k\}\supset \mathcal{P}_{p-1}\otimes D\tilde U$
  \item[2.] Choose a configuration for the quadrature rule such that 
  \begin{align*}
   \text{\# parameters} = K_{4} + 2K_{31} + 2K_{22}  + 3K_{211} + 4K_{1111} = k,
  \end{align*}
  \item[3.] Solve the system of equations given in \ccref{eq:quadRule} using Newton's method with a random initial guess. 
  \item[4.] If the method converges within a maximum number of iterations, check if the solution satisfies conditions C6 and C7.
  \item[5.] Repeat Steps 3 and 4 until a maximum number of trials is reached or an admissible solution is found.
\end{enumerate}

The quadrature rules that were obtained in this way are given in Tables \ref{tab:ML2n15}-\ref{tab:ML4n65}. There, $\#$ denotes the number of nodes in each equivalence class and $\beta_f:=x_1x_2x_3$ and $\beta_e=x_1x_2x_3x_4$ denote the face bubble function and interior bubble function, respectively. 

The quadrature rule with the least number of points we could find for the degree-2 15-node tetrahedron is the 14-point fifth-order accurate rule of \cite{grundmann78}. We also found an accurate quadrature rule of 10 points with positive weights, but the resulting method was not accurate since condition C7 was not satisfied: it had one non-constant mode with gradient equal to zero at all 10 points. We also considered the 15-point quadrature rule used for the mass matrix, which also satisfies C6-C8. However, this rule significantly increases the condition number of the element matrix and therefore results in a considerably smaller time step size as shown in Section \ref{sec:disp}.

The quadrature rules for the degree-3 and degree-4 elements are new and require less quadrature points than rules currently available in the literature, since most quadrature rules in the literature are constructed to be exact for a function space of the form $\mathcal{P}_k$ and not for the specific function spaces $\mathcal{P}_{p-1}\otimes D\tilde U$. To give an example, the highest polynomial degree of $D\tilde U$ of the degree-4 61- or 65-node tetrahedron is 7, so $\mathcal{P}_3\otimes D\tilde U$ contains a polynomial of degree 10. A quadrature rule that is order-10 accurate already requires 79 quadrature points \cite{zhang09}, while our quadrature rule for these elements only requires 60 points. Similarly, our quadrature rules for the degree-3 32-node tetrahedron and the degree-4 60-node tetrahedron require 21 and 51 points, respectively, while the quadrature rules currently available in the literature for these elements require 24 and 59 points \cite{zhang09}.

\section{Error estimates}
\label{sec:accuracy}

In this section, we prove that, when conditions C1-C8 are satisfied, the finite element method maintains an optimal order of convergence for a related elliptic problem. Convergence for the wave equation can then be derived in a way analogous to \cite[Chapter 4.6]{geevers18b}.

Throughout this section, we will let $p$ denote the degree of the finite element space, by which we mean the largest degree such that $\tilde U\supset\mathcal{P}_p$. We will also let $C$ denote a positive constant that may depend on the domain $\Omega$, the regularity of the mesh, the parameters $\rho$ and $c$, the reference space $\tilde U$, and the reference quadrature rules $(\tilde{\mathcal{Q}},\{\tilde\omega_{\tilde\vx}\}_{\tilde\vx\in\tilde{\mathcal{Q}}})$ and $(\tilde{\mathcal{Q}}',\{\tilde\omega_{\tilde\vx}\}_{\tilde\vx\in\tilde{\mathcal{Q}}'})$, but does not depend on the mesh resolution $h$ and the functions that appear in the inequalities.

\subsection{Preliminary results}
To obtain error bounds, we first define some norms and function spaces and list a few preliminary results. Let $H^k(\Omega)$ denote the Sobolev space of functions on $\Omega$ with order-$k$ square-integrable weak derivatives and equip the space with norm
\begin{align*}
\|u\|_k^2 &:= \sum_{|\alpha|\leq k} \|D^{\valpha}u\|_0^2, 
\end{align*}
where $\|\cdot\|_0$ denotes the $L^2$-norm, $D^{\valpha}:=\partial_1^{\alpha_1}\partial_2^{\alpha_2}\partial_3^{\alpha_3}$ the partial derivative, and $|\valpha|:=\alpha_1+\alpha_2+\alpha_3$ the order of the derivative. We also define the broken Sobolev spaces $H^k(\mathcal{T}_h):=\{u\in L^2(\Omega) \;|\; u|_e\in H^k(e) \text{ for all }e\in\mathcal{T}_h\}$, equipped with norm
\begin{align*}
\|u\|^2_{\mathcal{T}_h,k}:=\sum_{e\in\mathcal{T}_h} \|u|_e\|_k^2.
\end{align*} 
Throughout this section, we will use the fact that $H^2(\Omega) \supset\mathcal{C}^0(\overline\Omega)$ for any three-dimensional domain $\Omega$. 

We also define the semi-norms $|u|_{\mathcal{Q}_h}^2:=(u,u)_{\mathcal{Q}_h}$ and $|\sigma|_{\mathcal{Q}_h'}^2:=(\sigma,\sigma)_{\mathcal{Q}_h'}$ for piecewise continuous functions $u$ and $\sigma$, and define $\Pi_{h,q}$ to be the $L^2$-projection operators projecting onto the discontinuous piecewise-polynomial spaces $V(\mathcal{T}_h,\mathcal{P}_{q}):=\{u\in L^2(\Omega) \;|\; u\circ\phi_e\in\mathcal{P}_q\text{ for all }e\in\mathcal{T}_h\}$. Several useful properties of these spaces and operators are listed below.

\begin{lem}
\label{lem:pre1}
Let $q\geq 0$. Then 
\begin{align*}
|u_h|_{\mathcal{Q}_h} &\leq C\|u_h\|_0 &&\text{for all }u_h\in V(\mathcal{T}_h,\mathcal{P}_{q}), \\
|\tsigma_h|_{\mathcal{Q}_h'} &\leq C\|\tsigma_h\|_0 &&\text{for all }\tsigma_h\in V(\mathcal{T}_h,\mathcal{P}_{q})^3.
\end{align*}
\end{lem}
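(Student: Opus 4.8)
The plan is to reduce both inequalities to a single scaling estimate on the reference tetrahedron $\tilde e$ and then exploit the finite-dimensionality of $\mathcal{P}_q$.

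First I would pull everything back to $\tilde e$. For $u_h\in V(\mathcal{T}_h,\mathcal{P}_q)$ each pullback $\tilde u_e:=u_h\circ\phi_e$ lies in $\mathcal{P}_q$, and, exactly as in the definition of the approximate inner product, we have the two elementwise identities
\begin{align*}
|u_h|_{\mathcal{Q}_h}^2 = \sum_{e\in\mathcal{T}_h}\frac{|e|}{|\tilde e|}\sum_{\tilde\vx\in\tilde{\mathcal{Q}}}\tilde\omega_{\tilde\vx}\,\tilde u_e(\tilde\vx)^2, \qquad \|u_h\|_0^2 = \sum_{e\in\mathcal{T}_h}\frac{|e|}{|\tilde e|}\int_{\tilde e}\tilde u_e^2\,d\tilde\vx,
\end{align*}
where in the second identity I used the affine change of variables $\vx=\phi_e(\tilde\vx)$, whose Jacobian determinant equals $|e|/|\tilde e|$. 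The crucial observation is that the \emph{same} factor $|e|/|\tilde e|$ appears in both sums, so if I can bound the inner reference sum by the reference integral with a constant that does not see the element $e$, that factor will cancel and the resulting constant will be independent of $h$.

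Next I would establish the reference estimate: there is a constant $C$, depending only on $\tilde{\mathcal{Q}}$, the weights $\{\tilde\omega_{\tilde\vx}\}$, and $q$, such that
\begin{align*}
\sum_{\tilde\vx\in\tilde{\mathcal{Q}}}\tilde\omega_{\tilde\vx}\,\tilde u(\tilde\vx)^2 \leq C\,\|\tilde u\|_{L^2(\tilde e)}^2 \qquad \text{for all }\tilde u\in\mathcal{P}_q.
\end{align*}
This is a pure finite-dimensional fact. On $\mathcal{P}_q(\tilde e)$ the right-hand side is a genuine norm, while the left-hand side is a continuous quadratic form (indeed, by positivity of the weights, condition C4, the square of a seminorm). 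Since all norms and seminorms on a finite-dimensional space are mutually bounded---equivalently, by compactness of the unit sphere $\{\tilde u\in\mathcal{P}_q : \|\tilde u\|_{L^2(\tilde e)}=1\}$ together with continuity of $\tilde u\mapsto\sum_{\tilde\vx}\tilde\omega_{\tilde\vx}\tilde u(\tilde\vx)^2$---the maximum of the left-hand side over this sphere is attained and finite, and I take $C$ to be that maximum.

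Finally I would assemble. Inserting the reference estimate into the first identity and cancelling the matching factors $|e|/|\tilde e|$ gives $|u_h|_{\mathcal{Q}_h}^2 \leq C\sum_{e}\frac{|e|}{|\tilde e|}\|\tilde u_e\|_{L^2(\tilde e)}^2 = C\|u_h\|_0^2$, which is the first claim. The vector-valued bound follows by applying the argument componentwise: writing $\tsigma_h=(\sigma_{h,1},\sigma_{h,2},\sigma_{h,3})$ with each $\sigma_{h,l}\in V(\mathcal{T}_h,\mathcal{P}_q)$, we have $|\tsigma_h|_{\mathcal{Q}_h'}^2=\sum_{l=1}^3|\sigma_{h,l}|_{\mathcal{Q}_h'}^2$ and $\|\tsigma_h\|_0^2=\sum_{l=1}^3\|\sigma_{h,l}\|_0^2$, so the scalar estimate for the primed rule (which satisfies the same positivity, condition C6) applied to each component and summed yields the result. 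The only point that requires care---and the reason the bound is $h$-independent---is the exact cancellation of the Jacobian factors; this relies on the mappings $\phi_e$ being affine so that their Jacobian determinants are constant and equal to $|e|/|\tilde e|$. No mesh shape-regularity is needed for this particular estimate, in contrast to bounds that involve derivatives.
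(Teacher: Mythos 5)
Your proof is correct and is essentially the same argument as the paper's: the paper disposes of this lemma in one sentence, appealing precisely to affine equivalence with the reference element and the finite-dimensionality of $\mathcal{P}_q$, which are exactly the two ingredients you have spelled out (cancellation of the constant Jacobian factors $|e|/|\tilde e|$, plus the compactness/norm-equivalence argument on $\mathcal{P}_q(\tilde e)$, applied componentwise for the vector case). Your expanded version, including the observation that no shape-regularity is needed, is a faithful filling-in of the details the paper leaves implicit.
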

\begin{proof}
These results follow immediately from the fact that the elements are affine equivalent with the reference element and that the reference space $\mathcal{P}_{q}$ is finite dimensional.
\end{proof}

\begin{lem}
\label{lem:pre2}
If conditions C1-C4 are satisfied, then $|\cdot|_{\mathcal{Q}_h}$ becomes a full norm $\|\cdot\|_{\mathcal{Q}_h}$ on $U(\mathcal{T}_h,\tilde U)$ and
\begin{align*}
 \|u_h\|_{\mathcal{Q}_h} \geq C\|u_h\|_0 &&\text{for all }u_h\in U_h.
\end{align*}
Furthermore, if conditions C1-C3, C6, and C7 are satisfied, then $|\cdot|_{\mathcal{Q}_h'}$ becomes a full norm $\|\cdot\|_{\mathcal{Q}_h'}$ on $V(\mathcal{T}_h,D\tilde U)$ and
\begin{align*}
\|\nabla u_h\|_{\mathcal{Q}_h'} \geq C\|\nabla u_h\|_{0} &&\text{for all }u_h\in U_h.
\end{align*}
\end{lem}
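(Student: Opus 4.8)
My plan is to reduce both claims to norm equivalences on the fixed, finite-dimensional reference element and then transport them to the mesh through the affine maps $\phi_e$, using positivity of the quadrature weights to upgrade the quadrature seminorms to genuine norms.

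For the first part I would note that conditions C1--C3 already guarantee that $U(\mathcal{T}_h,\tilde U)$ and $U_h$ are well defined, so the statement is meaningful. To see that $|\cdot|_{\mathcal{Q}_h}$ is a full norm, suppose $|u_h|_{\mathcal{Q}_h}=0$ for some $u_h\in U(\mathcal{T}_h,\tilde U)$. Since $|e|/|\tilde e|>0$ and the weights $\tilde\omega_{\tilde\vx}$ are strictly positive by C4, every term in $(u_h,u_h)_{\mathcal{Q}_h}$ is nonnegative, so $\tilde u_e(\tilde\vx)=0$ at every node $\tilde\vx\in\tilde{\mathcal{Q}}$ and every $e$; unisolvence (C1) then forces $\tilde u_e\equiv 0$, hence $u_h=0$. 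For the inequality I would use that on the reference element both $\tilde u\mapsto(\sum_{\tilde\vx}\tilde\omega_{\tilde\vx}\tilde u(\tilde\vx)^2)^{1/2}$ and $\tilde u\mapsto\|\tilde u\|_{L^2(\tilde e)}$ are norms on the finite-dimensional space $\tilde U$ (the first by the argument just given), hence equivalent with a constant $c>0$. Because $\phi_e$ is affine and no derivatives are involved, both the quadrature sum and the integral over $e$ acquire the same factor $|e|/|\tilde e|$, so the local bound $|u_h|_{\mathcal{Q}_h,e}\ge c\,\|u_h\|_{L^2(e)}$ holds with the reference constant; summing over $e\in\mathcal{T}_h$ and using $U_h\subseteq U(\mathcal{T}_h,\tilde U)$ gives the claim.

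For the second part the same template applies, but the gradient makes the scaling anisotropic. I would first establish the reference equivalence on $\tilde U/\mathbb{R}$: the map $\tilde u\mapsto(\sum_{\tilde\vx\in\tilde{\mathcal{Q}}'}\tilde\omega'_{\tilde\vx}|\tilde\nabla\tilde u(\tilde\vx)|^2)^{1/2}$ is a seminorm that, by positivity (C6), vanishes exactly when $\tilde\nabla\tilde u$ vanishes at every point of $\tilde{\mathcal{Q}}'$, and by the spurious-free condition (C7) this occurs only when $\tilde u$ is constant; hence it is a norm on $\tilde U/\mathbb{R}$, as is $\tilde u\mapsto\|\tilde\nabla\tilde u\|_{L^2(\tilde e)}$ since $\tilde e$ is connected. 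Finite-dimensionality yields $c>0$ with $\sum_{\tilde\vx}\tilde\omega'_{\tilde\vx}|\tilde\nabla\tilde u(\tilde\vx)|^2\ge c^2\|\tilde\nabla\tilde u\|_{L^2(\tilde e)}^2$, which in particular shows that $|\cdot|_{\mathcal{Q}_h'}$ is a full norm on the discrete gradient fields. To pass to the physical bound I would write $\nabla u_h\circ\phi_e = J_e^{-T}\tilde\nabla\tilde u_e$ with $J_e$ the constant Jacobian of $\phi_e$, and set $G_e:=J_e^{-1}J_e^{-T}$ so that $|J_e^{-T}v|^2=v^TG_ev$. Bounding $G_e$ between its extreme eigenvalues and inserting the reference equivalence gives, element by element, $\|\nabla u_h\|_{\mathcal{Q}_h',e}^2\ge c^2\,[\lambda_{\min}(G_e)/\lambda_{\max}(G_e)]\,\|\nabla u_h\|_{L^2(e)}^2$, with the volume factor $|e|/|\tilde e|$ cancelling on both sides; summing over elements and bounding the condition number $\lambda_{\max}(G_e)/\lambda_{\min}(G_e)$ uniformly by the mesh shape-regularity produces $\|\nabla u_h\|_{\mathcal{Q}_h'}\ge C\|\nabla u_h\|_0$.

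I expect the main obstacle to be precisely this anisotropy step: unlike the first part, the reference and physical gradient norms are not related by a scalar rescaling, so the mesh regularity must enter through the conditioning of $G_e$, which is exactly why $C$ is allowed to depend on the mesh regularity and not only on $\tilde U$ and the reference rule. A secondary point to settle is the exact reading of ``full norm on $V(\mathcal{T}_h,D\tilde U)$'': condition C7 controls full gradients $\tilde\nabla\tilde u$, whereas a generic element of $D\tilde U$ is only a linear combination of single directional derivatives, so taking the statement literally for all of $V(\mathcal{T}_h,D\tilde U)$ would require the stronger injectivity of nodal evaluation on $D\tilde U$ at $\tilde{\mathcal{Q}}'$; for the inequality itself, however, only the gradient-level equivalence established above is needed.
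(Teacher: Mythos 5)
Your proof is correct and takes essentially the same route as the paper, whose entire proof is the one-line remark that the inequalities ``follow immediately from the fact that the elements are affine equivalent with the reference element and that the reference element space $\tilde U$ is finite dimensional''; you have simply supplied the details (definiteness of the quadrature seminorms on the reference element via C1/C4, respectively C6/C7, finite-dimensional norm equivalence, and affine transport with the Jacobian condition number absorbed into the shape-regularity constant). Your closing caveat is also well taken: C7 gives definiteness only for full gradients of functions in $\tilde U$, so the literal ``full norm on $V(\mathcal{T}_h,D\tilde U)$'' claim would require injectivity of nodal evaluation on $D\tilde U$ at $\tilde{\mathcal{Q}}'$, whereas the inequality actually used downstream needs only the gradient-level equivalence you established.
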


\begin{proof}
These inequalities follow immediately from the fact that the elements are affine equivalent with the reference element and that the reference element space $\tilde U$ is finite dimensional.
\end{proof}

\begin{lem}
\label{lem:pre3}
Let $u \in H^k(\mathcal{T}_h)$ and $\tsigma \in H^k(\mathcal{T}_h)^3$, with $k\geq 0$, and let $q\geq 0$. Then
\begin{align*}
\|u-\Pi_{h,q}u \|_{\mathcal{T}_h,m} &\leq Ch^{\min(q+1,k)-m}\|u\|_{\mathcal{T}_h,\min(q+1,k)}, &&m\leq\min(q+1,k), \\
\|\tsigma-\Pi_{h,q}\tsigma \|_{\mathcal{T}_h,m} &\leq Ch^{\min(q+1,k)-m}\|\tsigma\|_{\mathcal{T}_h,\min(q+1,k)}, &&m\leq\min(q+1,k).
\end{align*}
Furthermore, if $k\geq 2$, then
\begin{align*}
|u-\Pi_{h,q}u |_{\mathcal{Q}_h} &\leq Ch^{\min(q+1,k)}\|u\|_{\mathcal{T}_h,\min(q+1,k)},  \\ 
|\tsigma-\Pi_{h,q}\tsigma |_{\mathcal{Q}_h'} &\leq Ch^{\min(q+1,k)}\|\tsigma\|_{\mathcal{T}_h,\min(q+1,k)}.
\end{align*}
Finally, if $u\in H^1(\Omega)\cap H^k(\mathcal{T}_h)$, with $k\geq 2$, then
\begin{align*}
\|u-I_hu\|_{\mathcal{T}_h,m} &\leq Ch^{\min(p+1,k)-m}\|u\|_{\mathcal{T}_h,\min(p+1,k)}, &&m\leq\min(p+1,k)
\end{align*}
with $p\geq 2$ the degree of the finite element space.
\end{lem}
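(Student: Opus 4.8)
The three groups of estimates are instances of standard finite-element approximation theory, and the plan is to reduce each to the reference element $\tilde e$ and apply a Bramble--Hilbert (Deny--Lions) argument together with affine scaling. First I would use that the elements are shape-regular and affine equivalent to $\tilde e$, so that for the pullback $\tilde u_e=u\circ\phi_e$ one has $|\tilde u_e|_{H^j(\tilde e)}\sim |e|^{-1/2}h_e^{\,j}|u|_{H^j(e)}$, with $h_e$ the element diameter and the constants depending only on the mesh regularity. I would also record two reproduction properties: the restriction of $\Pi_{h,q}$ to $e$ commutes with the pullback, $(\Pi_{h,q}u)\circ\phi_e=\tilde\Pi_q\tilde u_e$ where $\tilde\Pi_q$ is the $L^2(\tilde e)$-projection onto $\mathcal{P}_q$, and hence it reproduces $\mathcal{P}_q$; similarly $I_h$ reproduces $\mathcal{P}_p$, since $\mathcal{P}_p\subset\tilde U$ and $\tilde U$ is unisolvent on $\tilde{\mathcal Q}$ by condition C1.

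For the first pair of estimates I would argue elementwise with $s=\min(q+1,k)$. Because $\tilde\Pi_q$ fixes $\mathcal{P}_{s-1}\subset\mathcal{P}_q$, the map $\tilde u\mapsto\|\tilde u-\tilde\Pi_q\tilde u\|_{H^m(\tilde e)}$ is a bounded sublinear functional on $H^s(\tilde e)$ that annihilates $\mathcal{P}_{s-1}$, so Deny--Lions gives $\|\tilde u_e-\tilde\Pi_q\tilde u_e\|_{H^m(\tilde e)}\le C|\tilde u_e|_{H^s(\tilde e)}$. Scaling each derivative order back to $e$ produces the factor $h^{\,s-m}$; squaring and summing over $e\in\mathcal{T}_h$ gives the claim, and the vector estimate for $\tsigma$ follows component by component.

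The quadrature-seminorm estimates are where the real work lies. Since $\tilde{\mathcal Q}$ (resp. $\tilde{\mathcal Q}'$) is a finite point set with bounded weights, the reference seminorm is controlled by the sup-norm, $|\tilde v|_{\tilde{\mathcal Q}}\le C\|\tilde v\|_{\mathcal{C}^0(\tilde e)}$, and this is precisely the step that requires point evaluations to make sense. Because $H^2(\tilde e)\hookrightarrow\mathcal{C}^0(\tilde e)$ in three dimensions while $H^1(\tilde e)$ does not, it is here that the hypothesis $k\ge 2$ is forced. With $\tilde v_e=\tilde u_e-\tilde\Pi_q\tilde u_e$, the elementwise contribution obeys $C|e|^{1/2}\|\tilde v_e\|_{\mathcal{C}^0(\tilde e)}\le C|e|^{1/2}\|\tilde v_e\|_{H^2(\tilde e)}$, and applying Deny--Lions to $\tilde u\mapsto\|\tilde u-\tilde\Pi_q\tilde u\|_{H^2(\tilde e)}$, which annihilates $\mathcal{P}_q$ and is bounded on $H^s(\tilde e)$ as soon as $s=\min(q+1,k)\ge 2$, then scaling and summing, yields the stated $h^{\,s}$ bound; the same argument with $\tilde{\mathcal Q}'$ handles $\tsigma$. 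In the applications below one always has $q\ge p-1\ge 1$, so $s\ge 2$ and this route applies.

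The interpolation estimate has the same structure, with $\tilde\Pi_q$ replaced by the reference nodal operator $\tilde I$ onto $\tilde U$. Again $\tilde I$ is well defined on $H^k(\tilde e)$ with $k\ge 2$ exactly through $H^2(\tilde e)\hookrightarrow\mathcal{C}^0(\tilde e)$, and it reproduces $\mathcal{P}_p$; thus $\tilde u\mapsto\|\tilde u-\tilde I\tilde u\|_{H^m(\tilde e)}$ annihilates $\mathcal{P}_p$ and is bounded on $H^s(\tilde e)$ with $s=\min(p+1,k)\ge 2$, so Deny--Lions plus affine scaling gives $h^{\,s-m}$. The one additional ingredient is that the global interpolant must belong to $H^1(\Omega)$: this is where the hypothesis $u\in H^1(\Omega)$ combines with the symmetry and face-conforming conditions C1--C3, which guarantee that the nodal values on a shared face determine the trace there and hence that $I_hu$ is continuous across interfaces. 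The crux throughout remains the point-evaluation bound, since in three dimensions neither the quadrature seminorm nor the nodal interpolant is controlled by the $H^1$ norm; everything else is bookkeeping of scaling factors and summation over the mesh.
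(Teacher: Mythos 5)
Your proposal is correct on the cases it covers, but for the two quadrature-seminorm estimates it takes a genuinely different route from the paper. The paper never runs a Bramble--Hilbert argument on the seminorm directly: it introduces an auxiliary interpolation operator $I_h^*$ associated with a point set $\tilde{\mathcal{Q}}^*\supset\tilde{\mathcal{Q}}'$ on which $\mathcal{P}_{q^*}$ (some $q^*\geq q$) is unisolvent, uses that $I_h^*\tsigma$ and $\tsigma$ agree at every quadrature point to write $|\tsigma-\Pi_{h,q}\tsigma|_{\mathcal{Q}_h'}=|I_h^*\tsigma-\Pi_{h,q}\tsigma|_{\mathcal{Q}_h'}$, and then applies Lemma \ref{lem:pre1} (the seminorm is dominated by the $L^2$ norm on piecewise polynomials), the triangle inequality, and the cited projection/interpolation estimates of \cite{ciarlet78}. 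You instead bound the reference seminorm by the sup norm, invoke $H^2(\tilde e)\hookrightarrow\mathcal{C}^0(\tilde e)$, and apply Deny--Lions directly. The trade-off: the paper's interpolant trick is shorter and stays entirely within results already proved or cited, with the Sobolev embedding entering only implicitly through the well-posedness of $I_h^*$ on $H^2(\mathcal{T}_h)$; your argument is self-contained and makes explicit exactly where $k\geq 2$ is forced.

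One further point of comparison. Both arguments carry the same hidden restriction $\min(q+1,k)\geq 2$, i.e.\ $q\geq 1$: in the paper's chain, $\|I_h^*\tsigma-\tsigma\|_0$ can only be bounded using at least the broken $H^2$ norm of $\tsigma$, so for $q=0$ it also fails to produce the stated right-hand side $Ch\|\tsigma\|_{\mathcal{T}_h,1}$ (and indeed the $q=0$ statement is false in general, since point evaluations are not controlled by $H^1$ in three dimensions). You flag this restriction honestly, which the paper does not; however, your claim that the later applications only use $q\geq p-1\geq 1$ is not quite accurate. The proof of Lemma \ref{lem:int2} uses $\Pi_{h,0}$ only on piecewise polynomials, where Lemma \ref{lem:pre1} suffices, but the proof of Theorem \ref{thm:ell2} applies the fourth inequality with $\Pi_{h,0}$ to the non-polynomial function $c\nabla I_hz_h$. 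That use can be repaired because the function there is $c$ times a piecewise polynomial, so the extra $h^2\|\cdot\|_{\mathcal{T}_h,2}$ term produced by the correct ($H^2$-based) estimate can be absorbed via an inverse inequality, but it does fall outside the range both your proof and the paper's proof actually establish.
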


\begin{proof}
The first, second, and last inequality follow from \cite[Chapter 3.1]{ciarlet78}. For the fourth inequality, let $q^*\geq q$, be a polynomial degree and $\tilde{\mathcal{Q}}^*\supset\tilde{\mathcal{Q}}'$ a set of points such that $\mathcal{P}_{q^*}$ is unisolvent on $\tilde{\mathcal{Q}}^*$. Also, let $I_h^*$ be the interpolation operator that interpolates a function in $H^2(\mathcal{T}_h)$ at the nodes $\mathcal{Q}(\mathcal{T}_h,\tilde{\mathcal{Q}}^*)$ by a function in $V(\mathcal{T}_h,\mathcal{P}_{q^*})$. We can then obtain the fourth inequality as follows:
\begin{align*}
|\tsigma-\Pi_{h,q}\tsigma |_{\mathcal{Q}_h'} &= |I_h^*\tsigma-\Pi_{h,q}\tsigma |_{\mathcal{Q}_h'} \\
&\leq C \|I_h^*\tsigma-\Pi_{h,q}\tsigma \|_{0} \\
&\leq C (\|I_h^*\tsigma-\tsigma \|_{0} + \|\tsigma-\Pi_{h,q}\tsigma \|_{0}) \\
&\leq Ch^{\min(q+1,k)}\|\tsigma\|_{\mathcal{T}_h,\min(q+1,k)},
\end{align*}
where we used Lemma \ref{lem:pre1} in the the second line and the triangle inequality in the third line. The last line follows from \cite[Chapter 3.1]{ciarlet78}. 

The third inequality can be derived in a way analogous to the fourth inequality.
\end{proof}

\subsection{Estimates on the integration error}
Define the two integration errors $r_h(u,w):=(u,w)-(u,w)_{\mathcal{Q}_h}$ and $r'_h(\tsigma,\ttau):=(\tsigma,\ttau)-(\tsigma,\ttau)_{\mathcal{Q}_h'}$. In \cite{geevers18b} we derived the following bounds on $r_h$.
\begin{lem}
\label{lem:int1}
Let $p\geq 2$ be the degree of the finite element space, $u\in H^k(\Omega)$, with $k\geq 2$, and $w\in U_h$. If conditions C1-C5 are satisfied, then
\begin{align*}
|r_h(u,w)| &\leq Ch^{\min(p,k)}\|u\|_{\min(p,k)}\|w\|_1,  \\
|r_h(u,w)| &\leq Ch^{\min(p+1,k)}\|u\|_{\min(p+1,k)}\|w\|_{\mathcal{T}_h,2}.
\end{align*}
\end{lem}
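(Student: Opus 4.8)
The plan is to reduce the global estimate to a single estimate on the reference tetrahedron $\tilde e$ and then exploit condition C5 together with an affine scaling argument. First I would localize: since both $(\cdot,\cdot)$ and $(\cdot,\cdot)_{\mathcal{Q}_h}$ are assembled element by element,
\[
r_h(u,w)=\sum_{e\in\mathcal{T}_h}\frac{|e|}{|\tilde e|}\,\tilde E(\tilde u_e,\tilde w_e),\qquad \tilde E(\tilde u,\tilde w):=\int_{\tilde e}\tilde u\,\tilde w\,d\tilde x-\sum_{\tilde\vx\in\tilde{\mathcal{Q}}}\tilde\omega_{\tilde\vx}\,\tilde u(\tilde\vx)\tilde w(\tilde\vx),
\]
where $\tilde u_e=u\circ\phi_e$ and $\tilde w_e=w\circ\phi_e\in\tilde U$. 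On the fixed reference element $\tilde E$ is a bounded bilinear form on $H^2(\tilde e)\times\tilde U$: point evaluations are bounded because $H^2(\tilde e)\hookrightarrow\mathcal{C}^0(\overline{\tilde e})$ in three dimensions, and on the finite-dimensional space $\tilde U$ all norms are equivalent. By C5, $\tilde E(\tilde u,\tilde w)=0$ whenever $\tilde u\in\mathcal{P}_{p-2}$; and since $\tilde U\supset\mathcal{P}_p$, the rule is in fact exact on $\mathcal{P}_{p-2}\otimes\mathcal{P}_p=\mathcal{P}_{2p-2}$.

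The heart of the proof is the reference estimate, obtained by a two-sided degree reduction. For the first bound I would write $\tilde u_e=\pi\tilde u_e+(\tilde u_e-\pi\tilde u_e)$ with $\pi$ the $\mathcal{P}_{\min(p,k)-1}$-approximation, and split $\pi\tilde u_e=\tilde u_{\le p-2}+\tilde u_{[p-1]}$ into its part of degree $\le p-2$ and its homogeneous degree-$(p-1)$ part. Condition C5 kills $\tilde E(\tilde u_{\le p-2},\tilde w_e)$, and $\mathcal{P}_{2p-2}$-exactness kills $\tilde E(\tilde u_{[p-1]},\tilde w_0)$, where $\tilde w_0$ is the constant part of $\tilde w_e$ (here $\tilde u_{[p-1]}\tilde w_0\in\mathcal{P}_{p-1}\subset\mathcal{P}_{2p-2}$ for $p\ge2$). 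What survives is $\tilde E(\tilde u_e-\pi\tilde u_e,\tilde w_e)+\tilde E(\tilde u_{[p-1]},\tilde w_e-\tilde w_0)$; bounding each factor by Bramble--Hilbert and the norm equivalence on $\tilde U$ gives $|\tilde E(\tilde u_e,\tilde w_e)|\le C\big(|\tilde u_e|_{H^{\min(p,k)}(\tilde e)}\|\tilde w_e\|_{L^2(\tilde e)}+|\tilde u_e|_{H^{\min(p-1,k)}(\tilde e)}|\tilde w_e|_{H^1(\tilde e)}\big)$. For the second bound I would reduce $\tilde u_e$ one degree further (to $\mathcal{P}_{\min(p+1,k)-1}$) and peel $\tilde w_e$ down to its $\mathcal{P}_1$-part, using $\mathcal{P}_{2p-2}$-exactness to annihilate the products of the homogeneous degree-$(p-1)$ and degree-$p$ parts of $\tilde u_e$ against the low-degree parts of $\tilde w_e$; the surviving terms then carry $|\tilde w_e|_{H^2(\tilde e)}$.

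Finally I would scale back with the standard affine estimates $|\tilde v|_{H^m(\tilde e)}\le C h_e^{m-3/2}|v|_{H^m(e)}$ and $|e|/|\tilde e|\le Ch_e^3$, which convert each surviving product into the advertised power of $h$ times local seminorms of $u$ and $w$, and then sum over $e$ by the Cauchy--Schwarz inequality, using $w\in U_h\subset H^1(\Omega)$ for the first bound and the broken norm $\|w\|_{\mathcal{T}_h,2}$ for the second. The main obstacle is obtaining the \emph{sharp} power of $h$: a one-sided Bramble--Hilbert argument in $\tilde u$ alone only yields $h^{\min(p-1,k)}$, and recovering the full extra order requires the simultaneous use of the two distinct exactness properties (on $\mathcal{P}_{p-2}\otimes\tilde U$ and on $\mathcal{P}_{2p-2}$) to cancel the cross terms at exactly the order at which $|w|_1$, respectively $|w|_2$, appears, while the minimal-regularity cases $k<p$ are tracked through the $\min(\cdot,k)$ bookkeeping and the remainder estimate of Lemma \ref{lem:pre3}.
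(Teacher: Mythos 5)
Two remarks up front: this lemma is not actually proved in the paper (it is quoted from \cite{geevers18b}); the closest in-paper proof is that of the companion result, Lemma \ref{lem:int2}, which is organised around global $L^2$-projections $\Pi_{h,q}$ and Lemmas \ref{lem:pre1} and \ref{lem:pre3} rather than reference-element error functionals. Your localisation/Bramble--Hilbert/scaling framework is an equivalent and legitimate way to set this up, and your argument for the \emph{first} inequality is correct: the only cross term there is $\tilde u_{[p-1]}\tilde w_0\in\mathcal{P}_{p-1}\subset\mathcal{P}_{p-2}\otimes\tilde U$, which C5 genuinely annihilates, and the two surviving terms scale as $h^{\min(p,k)}$ and $h^{\min(p-1,k)+1}\geq h^{\min(p,k)}$, as you say.

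The gap is in your second inequality, precisely at $p=2$. You claim that $\mathcal{P}_{2p-2}$-exactness kills all products of the homogeneous degree-$(p-1)$ and degree-$p$ parts of $\tilde u_e$ against the $\mathcal{P}_1$-part of $\tilde w_e$. The worst such product, $\tilde u_{[p]}\tilde w_{[1]}$, has degree $p+1$, and $p+1\leq 2p-2$ only when $p\geq 3$. For $p=2$ this product is a generic cubic, and neither $\mathcal{P}_{2p-2}=\mathcal{P}_2$-exactness nor C5 itself covers it: there C5 only gives exactness on $\mathcal{P}_0\otimes\tilde U=\tilde U=\mathcal{P}_2\oplus B_f\oplus B_e$, which does not contain all of $\mathcal{P}_3$. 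Since the degree-2 element is one of the main elements of the paper, this case cannot be discarded. The step is repairable in two ways. (i) Stagger the peeling, which is exactly the structure the paper uses in the proof of Lemma \ref{lem:int2}: pair $\tilde u_{[p]}$ only with $\tilde w_e-\tilde w_0$ (the term $\tilde u_{[p]}\tilde w_0\in\mathcal{P}_p\subset\mathcal{P}_{2p-2}$ is killed for $p\geq 2$), and pair only $\tilde u_{[p-1]}$ with $\tilde w_e-\tilde w_{\leq 1}$; the first pairing carries $|\tilde w_e|_{H^1}$ but gains an extra power of $h$ from $\|\tilde u_{[p]}\|_{L^2(\tilde e)}\leq C|\tilde u_e|_{H^{\min(p,k)}(\tilde e)}$, giving $h^{\min(p,k)+1}\geq h^{\min(p+1,k)}$, while the second gives $h^{\min(p-1,k)+2}\geq h^{\min(p+1,k)}$. (ii) Keep your splitting but \emph{estimate} the offending cross term instead of annihilating it: $|\tilde E(\tilde u_{[p]},\tilde w_{[1]})|\leq C\|\tilde u_{[p]}\|_{L^2(\tilde e)}\|\tilde w_{[1]}\|_{L^2(\tilde e)}\leq C|\tilde u_e|_{H^{\min(p,k)}(\tilde e)}|\tilde w_e|_{H^1(\tilde e)}$, which after scaling also contributes at order $h^{\min(p,k)+1}$ --- no cancellation is needed because both factors already carry enough powers of $h$. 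Either fix yields the stated bound, using $\|w\|_1\leq\|w\|_{\mathcal{T}_h,2}$ to absorb the $|w|_{H^1}$-type contributions.
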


We also derive bounds on the integration error for the stiffness matrix.
\begin{lem}
\label{lem:int2}
Let $p\geq 2$ be the degree of the finite element space, $\tsigma\in H^k(\mathcal{T}_h)^3$ with $k\geq 2$, and $\ttau\in V(\mathcal{T}_h,D\tilde U)^3$. If conditions C1-C3, C6, and C8 are satisfied, then
\begin{align}
|r_h'(\tsigma,\ttau)| &\leq Ch^{\min(p,k)}\|\tsigma\|_{\mathcal{T}_h,\min(p,k)}\|\ttau\|_{0}, \label{eq:int2a} \\
|r_h'(\tsigma,\ttau)| &\leq Ch^{\min(p+1,k)}\|\tsigma\|_{\mathcal{T}_h,\min(p+1,k)}\|\ttau\|_{\mathcal{T}_h,1}. \label{eq:int2b}
\end{align}
\end{lem}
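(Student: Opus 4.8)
The plan is to imitate the proof of Lemma \ref{lem:int1} for the mass matrix: reduce the global error to a single reference quadrature-error functional, and then combine the exactness supplied by C8 with affine scaling and a Bramble--Hilbert argument. Writing $\ttsigma_e:=\tsigma\circ\phi_e$ and $\tttau_e:=\ttau\circ\phi_e$ for the pull-backs to $\tilde e$, the definition of the discrete inner product together with the affine equivalence of the elements (ensured by C1--C3) gives
\begin{align*}
r_h'(\tsigma,\ttau)=\sum_{e\in\mathcal{T}_h}\frac{|e|}{|\tilde e|}\,\mathcal{E}(\ttsigma_e,\tttau_e), \qquad \mathcal{E}(\tilde\sigma,\tilde\tau):=\int_{\tilde e}\tilde\sigma\cdot\tilde\tau\,d\tilde x-\sum_{\tilde\vx\in\tilde{\mathcal{Q}}'}\tilde\omega_{\tilde\vx}'\,\tilde\sigma(\tilde\vx)\cdot\tilde\tau(\tilde\vx).
\end{align*}
Since $H^2(\tilde e)\supset\mathcal{C}^0(\overline{\tilde e})$ in three dimensions, the point evaluations are controlled by $\|\cdot\|_{H^2(\tilde e)}$, and since $D\tilde U$ is finite dimensional the discrete seminorm of $\tilde\tau\in(D\tilde U)^3$ is controlled by $\|\tilde\tau\|_{L^2(\tilde e)}$ (C6 guarantees this seminorm is well defined). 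Hence $\mathcal{E}$ is a bounded bilinear form on $H^2(\tilde e)^3\times L^2(\tilde e)^3$, and a fortiori on $H^p(\tilde e)^3\times H^1(\tilde e)^3$ for $p\geq 2$.

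The two exactness facts I would extract from C8 are: (a) $\mathcal{E}(\tilde\sigma,\tilde\tau)=0$ whenever $\tilde\sigma\in\mathcal{P}_{p-1}^3$ and $\tilde\tau\in(D\tilde U)^3$; and, crucially, (b) $\mathcal{E}(\tilde\sigma,\tilde\tau)=0$ whenever $\tilde\sigma\in\mathcal{P}_p^3$ and $\tilde\tau$ is constant. Fact (b) holds because $D\tilde U\supset\mathcal{P}_{p-1}$, so that
\begin{align*}
\mathcal{P}_{p-1}\otimes D\tilde U\supset\mathcal{P}_{p-1}\otimes\mathcal{P}_{p-1}=\mathcal{P}_{2p-2}\supset\mathcal{P}_p \qquad (p\geq 2),
\end{align*}
i.e. the rule is in fact exact on all of $\mathcal{P}_p$. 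For the first estimate \ccref{eq:int2a} I would insert $\Pi_{h,p-1}\tsigma$: by (a) the polynomial part is integrated exactly, so $r_h'(\tsigma,\ttau)=r_h'(\tsigma-\Pi_{h,p-1}\tsigma,\ttau)$; applying Cauchy--Schwarz to both the exact and the discrete inner product, bounding $|\ttau|_{\mathcal{Q}_h'}\leq C\|\ttau\|_0$ as in Lemma \ref{lem:pre1}, and invoking the approximation bounds of Lemma \ref{lem:pre3} for $\tsigma-\Pi_{h,p-1}\tsigma$ in both $\|\cdot\|_0$ and $|\cdot|_{\mathcal{Q}_h'}$ then gives \ccref{eq:int2a}.

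For the second estimate \ccref{eq:int2b} the orders $\min(p,k)$ and $\min(p+1,k)$ coincide when $k\leq p$, so in that range \ccref{eq:int2b} follows from the argument just given together with $\|\ttau\|_0\leq\|\ttau\|_{\mathcal{T}_h,1}$. The genuinely new case is $k\geq p+1$, where I would split $r_h'(\tsigma,\ttau)=r_h'(\tsigma-\Pi_{h,p}\tsigma,\ttau)+r_h'(\Pi_{h,p}\tsigma,\ttau)$. The first term is treated exactly as for \ccref{eq:int2a} (now with $\Pi_{h,p}$), contributing $Ch^{\min(p+1,k)}\|\tsigma\|_{\mathcal{T}_h,\min(p+1,k)}\|\ttau\|_0$. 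For the second term I would use (a) and (b) in a double Bramble--Hilbert argument on $\tilde e$: for $\tilde r\in\mathcal{P}_p^3$ and $\tilde g\in(D\tilde U)^3$ the functional $\mathcal{E}(\tilde r,\tilde g)$ vanishes when $\tilde r\in\mathcal{P}_{p-1}^3$ and also when $\tilde g$ is constant, so subtracting the respective $\mathcal{P}_{p-1}$- and $\mathcal{P}_0$-projections and using boundedness of $\mathcal{E}$ gives
\begin{align*}
|\mathcal{E}(\tilde r,\tilde g)|\leq C\,|\tilde r|_{H^p(\tilde e)}\,|\tilde g|_{H^1(\tilde e)}.
\end{align*}
Transforming back to $e$ by the standard affine scaling of the two seminorms contributes a factor $h_e^{p}$ from $\tilde r$ and a factor $h_e$ from $\tilde g$; combined with $H^p$-seminorm stability of $\Pi_{h,p}$ (controlled through Lemma \ref{lem:pre3}) and a discrete Cauchy--Schwarz sum over the elements, this bounds the second term by $Ch^{p+1}\|\tsigma\|_{\mathcal{T}_h,p}\|\ttau\|_{\mathcal{T}_h,1}$, which is dominated by the right-hand side of \ccref{eq:int2b}.

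The main obstacle is exactly the reference estimate $|\mathcal{E}(\tilde r,\tilde g)|\leq C|\tilde r|_{H^p(\tilde e)}|\tilde g|_{H^1(\tilde e)}$, and in particular the observation (b) that C8 forces exactness on all of $\mathcal{P}_p$ once $p\geq 2$. It is this extra vanishing in the second argument that lets $\ttau$ be measured in its $H^1$-seminorm rather than its $L^2$-norm, and hence produces the gain of one power of $h$ that distinguishes \ccref{eq:int2b} from \ccref{eq:int2a}; without it the Bramble--Hilbert argument only returns $\|\ttau\|_0$ and the weaker order of \ccref{eq:int2a}. The remaining ingredients --- the seminorm scaling estimates, seminorm stability of $\Pi_{h,p}$, and the low-regularity truncation encoded by the various $\min(\cdot,\cdot)$ --- are routine and parallel the treatment of $r_h$ in Lemma \ref{lem:int1}.
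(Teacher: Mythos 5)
Correct, and essentially the paper's own proof: \ccref{eq:int2a} is obtained by the identical subtraction $r_h'(\tsigma,\ttau)=r_h'(\tsigma-\Pi_{h,p-1}\tsigma,\ttau)$, and your key fact (b) (that $\mathcal{P}_p\subset\mathcal{P}_{p-1}\otimes D\tilde U$ for $p\geq2$, so C8 forces exactness on $\mathcal{P}_p$ times constants) is precisely what the paper invokes for \ccref{eq:int2b}, where your reference-element identity $\mathcal{E}(\tilde r,\tilde g)=\mathcal{E}(\tilde r-\tilde\Pi_{p-1}\tilde r,\tilde g-\tilde\Pi_0\tilde g)$, summed over elements, coincides with the paper's decomposition $r_h'(\tsigma,\ttau)=r_h'(\tsigma-\Pi_{h,p}\tsigma,\ttau)+r_h'(\Pi_{h,p}\tsigma-\Pi_{h,p-1}\tsigma,\ttau-\Pi_{h,0}\ttau)$ since $\Pi_{h,p-1}\Pi_{h,p}=\Pi_{h,p-1}$. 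The only difference is cosmetic: you bound the surviving terms by finite-dimensional norm equivalence and affine scaling on the reference element, whereas the paper applies Cauchy--Schwarz together with Lemmas \ref{lem:pre1} and \ref{lem:pre3}, which encapsulate the same scaling arguments.
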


\begin{proof}
Using C8, we can write
\begin{align*}
r_h'(\tsigma,\ttau) &= r_h'(\tsigma-\Pi_{h,p-1}\tsigma,\ttau). 
\end{align*}
Inequality \ccref{eq:int2a} then follows from the Cauchy--Schwarz inequality and Lemma \ref{lem:pre3}.

Using C8 and the fact that $\mathcal{P}_p\subset\mathcal{P}_{p-1}\otimes D\tilde U$ for $p\geq 2$, we can also write
\begin{align*}
r_h'(\tsigma,\ttau) &= r_h'\big((\tsigma-\Pi_{h,p}\tsigma)+(\Pi_{h,p}\tsigma-\Pi_{h,p-1}\tsigma) + \Pi_{h,p-1}\tsigma, (\ttau-\Pi_{h,0}\ttau) + \Pi_{h,0}\ttau \big) \\
&= r_h'(\tsigma-\Pi_{h,p}\tsigma,\ttau) + r_h'(\Pi_{h,p}\tsigma-\Pi_{h,p-1}\tsigma,\ttau-\Pi_{h,0}\ttau).
\end{align*}
Inequality \ccref{eq:int2b} then follows from the Cauchy--Schwarz inequality and Lemma \ref{lem:pre3}.
\end{proof}

\subsection{Error estimates for a related elliptic problem}
Let $v\in \mathcal{C}^0(\overline\Omega)$. The elliptic problem corresponding to \ccref{eq:WF} is finding $u\in H^1_0(\Omega)$ such that
\begin{align}
(c\nabla u, \nabla w) &= (v,w) &&\text{for all }w\in H^1_0(\Omega).
\label{eq:ell}
\end{align}
The corresponding mass-lumped finite element method is finding $u_h\in U_h$ such that
\begin{align}
(c\nabla u_h,\nabla w)_{\mathcal{Q}_h'} &= (v,w)_{\mathcal{Q}_h} &&\text{for all }w\in U_h.
\label{eq:ellh}
\end{align}
In the next two theorems we prove optimal convergence in the $H^1$-norm and $L^2$-norm.

\begin{thm}[Optimal convergence in the $H^1$-norm]
\label{thm:ell1}
Let $u$ be the solution of \ccref{eq:ell} and $u_h$ the solution of \ccref{eq:ellh}. Assume $c\in\mathcal{C}^p(\overline\Omega)$, $u\in H^{k_u}(\Omega)$, and $v\in H^{k_v}(\Omega)$, with $k_u,k_v\geq 2$. If conditions C1-C8 are satisfied, then
\begin{align}
\label{eq:ell1}
\|u-u_h\|_1 &\leq Ch^{\min(p,k_u-1,k_v)}(\|u\|_{\min(p+1,k_u)} + \|v\|_{\min(p,k_v)}),
\end{align}
with $p \geq 2$ the degree of the finite element method.
\end{thm}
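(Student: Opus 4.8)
The plan is to run a Strang-type argument (a Céa estimate with variational crimes), driven by the coercivity of the discrete stiffness form and the two consistency-error bounds of Lemmas \ref{lem:int1} and \ref{lem:int2}. First I would record discrete coercivity: since $c\geq c_0>0$ and the weights are positive (C6), for any $w\in U_h$ we have $(c\nabla w,\nabla w)_{\mathcal{Q}_h'}\geq c_0\|\nabla w\|_{\mathcal{Q}_h'}^2\geq C\|\nabla w\|_0^2\geq C\|w\|_1^2$, where the middle step is the second statement of Lemma \ref{lem:pre2} (valid under C1--C3, C6, C7) and the last is Poincar\'e's inequality on $H^1_0(\Omega)\supset U_h$. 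Boundedness of the exact form comes from $c\leq c_1$, and of the discrete form from Lemma \ref{lem:pre1}; these are used only inside the term-by-term estimates below.

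Next I would set $\chi_h:=u_h-I_hu\in U_h$, which is well defined because $u\in H^{k_u}\subset\mathcal{C}^0$ for $k_u\geq 2$. Combining coercivity, the discrete equation \eqref{eq:ellh} tested with $\chi_h$, and the exact equation \eqref{eq:ell} with $w=\chi_h\in H^1_0(\Omega)$, and recalling the definitions of $r_h$ and $r_h'$, I obtain the decomposition
\begin{align*}
C\|\chi_h\|_1^2 &\leq (c\nabla\chi_h,\nabla\chi_h)_{\mathcal{Q}_h'} = (v,\chi_h)_{\mathcal{Q}_h} - (c\nabla I_hu,\nabla\chi_h)_{\mathcal{Q}_h'} \\
&= (c\nabla(u-I_hu),\nabla\chi_h) - r_h(v,\chi_h) + r_h'(c\nabla I_hu,\nabla\chi_h).
\end{align*}
The three terms on the right are a best-approximation term, the mass-quadrature error, and the stiffness-quadrature error. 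I deliberately keep $c\nabla I_hu$ (continuous, hence admitting point values) rather than $c\nabla u$ inside the quadrature, so that every discrete inner product remains well defined even at low regularity.

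I would then divide by $\|\chi_h\|_1$ and bound each term. The best-approximation term is handled by Cauchy--Schwarz, $c\leq c_1$, and the interpolation estimate of Lemma \ref{lem:pre3} ($m=1$), yielding $Ch^{\min(p,k_u-1)}\|u\|_{\min(p+1,k_u)}$. The mass-quadrature error is bounded directly by Lemma \ref{lem:int1}, giving $Ch^{\min(p,k_v)}\|v\|_{\min(p,k_v)}$. For the stiffness-quadrature error I apply Lemma \ref{lem:int2} with $\tsigma=c\nabla I_hu$ and $\ttau=\nabla\chi_h\in V(\mathcal{T}_h,D\tilde U)^3$. Collecting the three bounds, adding $\|u-I_hu\|_1\leq Ch^{\min(p,k_u-1)}\|u\|_{\min(p+1,k_u)}$ through $\|u-u_h\|_1\leq\|u-I_hu\|_1+\|\chi_h\|_1$, and using $h\leq 1$ so that $h^{\min(p,k_u-1)}$ and $h^{\min(p,k_v)}$ are each at most $h^{\min(p,k_u-1,k_v)}$, produces exactly \eqref{eq:ell1}.

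The hard part will be the stiffness-quadrature term. To get the sharp rate $\min(p,k_u-1)$ from Lemma \ref{lem:int2} I must take $k=\min(p,k_u-1)$ and establish the $h$-uniform bound $\|c\nabla I_hu\|_{\mathcal{T}_h,\min(p,k_u-1)}\leq C\|u\|_{\min(p+1,k_u)}$. This is precisely where the hypothesis $c\in\mathcal{C}^p(\overline\Omega)$ is needed (via the Leibniz rule, using $\min(p,k_u-1)\leq p$), together with the interpolation stability $\|\nabla I_hu\|_{\mathcal{T}_h,m}\leq\|\nabla u\|_{\mathcal{T}_h,m}+\|u-I_hu\|_{\mathcal{T}_h,m+1}\leq C\|u\|_{\min(p+1,k_u)}$ for $m=\min(p,k_u-1)$, where the identity $m+1=\min(p+1,k_u)$ is exactly what makes Lemma \ref{lem:pre3} applicable and prevents any inverse-estimate blow-up. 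A separate piece of bookkeeping is the borderline case $k_u=2$: there $\min(p,k_u-1)=1<2$, so Lemma \ref{lem:int2} cannot be invoked at that order; instead I would apply it at order $k=2$ and absorb the resulting factor with an inverse estimate, which still leaves the required first-order rate.
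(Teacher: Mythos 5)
Your proposal is correct and follows essentially the same route as the paper: the identical splitting $u-u_h=(u-I_hu)+(I_hu-u_h)$, the same coercivity argument via Lemma \ref{lem:pre2} and Poincar\'e's inequality, and the same three-term decomposition (best approximation, $r_h$, $r_h'$) bounded respectively by Lemma \ref{lem:pre3}, Lemma \ref{lem:int1}, and Lemma \ref{lem:int2} with $\tsigma=c\nabla I_hu$. Your explicit handling of the borderline case $k_u=2$, where Lemma \ref{lem:int2} formally requires $k\geq 2$ and you recover the rate $h^{\min(p,k_u-1)}$ by applying it at $k=2$ and absorbing the loss with an inverse estimate, addresses a detail the paper's proof passes over silently.
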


\begin{proof}
Define $e_h:=I_hu-u_h$ and $\epsilon_h:=u-I_hu$. Using \ccref{eq:ell}, we can write 
\begin{align*}
(c\nabla I_hu,\nabla e_h)_{\mathcal{Q}_h'} =& -r_h'(c\nabla I_hu,\nabla e_h) - (c\nabla \epsilon_h,\nabla e_h) + (c \nabla u , \nabla e_h) \\
=& -r_h'(c\nabla I_hu,\nabla e_h)- (c\nabla \epsilon_h,\nabla e_h) + (v, e_h)
\end{align*}
and using \ccref{eq:ellh}, we can obtain
\begin{align*}
(c\nabla u_h,\nabla e_h)_{\mathcal{Q}_h'} = (v, e_h)_{\mathcal{Q}_h}.
\end{align*}
Subtracting these two equalities gives
\begin{align}
\label{eq:ell1a}
(c\nabla e_h,\nabla e_h)_{\mathcal{Q}_h'} = -r_h'(c\nabla I_hu,\nabla e_h)- (c\nabla \epsilon_h,\nabla e_h) + r_h(v, e_h).
\end{align}
From Lemma \ref{lem:pre2}, the positivity of $c$, and Poincar\'e's inequality, it follows that
\begin{align}
\label{eq:ell1b}
\|e_h\|_1^2 &\leq C(c\nabla e_h,\nabla e_h)_{\mathcal{Q}_h'} .
\end{align}
Using Lemma \ref{lem:int2}, Lemma \ref{lem:pre3}, and the regularity of $c$, we can obtain
\begin{align}
\label{eq:ell1c}
|r_h'(c\nabla I_hu,\nabla e_h)| &\leq Ch^{\min(p,k_u-1)}\|c\nabla I_hu\|_{\mathcal{T}_h,\min(p,k_u-1)}\|e_h\|_1 \nonumber \\
& \leq Ch^{\min(p,k_u-1)}\|u\|_{\min(p+1,k_u)}\|e_h\|_1.
\end{align}
Using the Cauchy--Schwarz inequality, the boundedness of $c$, and Lemma \ref{lem:pre3}, we can also obtain
\begin{align}
\label{eq:ell1d}
|(c\nabla \epsilon_h,\nabla e_h)| &\leq Ch^{\min(p,k_u-1)}\|u\|_{\min(p+1,k_u)}\|e_h\|_1.
\end{align}
Finally, using Lemma \ref{lem:int1}, we can obtain
\begin{align}
\label{eq:ell1e}
|r_h(v, e_h)| &\leq Ch^{\min(p,k_v)}\|v\|_{\min(p,k_v)}\|e_h\|_1.
\end{align}
Combining \ccref{eq:ell1a}-\ccref{eq:ell1e} gives
\begin{align*}
\|e_h\|_1 &\leq Ch^{\min(p,k_u-1,k_v)}(\|u\|_{\min(p+1,k_u)} + \|v\|_{\min(p,k_v)}).
\end{align*}
Since $u-u_h=e_h+\epsilon_h$, inequality \ccref{eq:ell1} then follows from the above and Lemma \ref{lem:pre3}.
\end{proof}

To prove optimal convergence in the $L^2$-norm, we make the following regularity assumption: for any $v\in L^2(\Omega)$, the solution of \ccref{eq:ell} is in $H^2(\Omega)$ and satisfies
\begin{align}
\label{eq:reg}
\|u\|_2 \leq C\|v\|_0.
\end{align}
This is certainly true when $\partial\Omega$ is $\mathcal{C}^2$ and $c\in\mathcal{C}^1(\overline\Omega)$.

\begin{thm}[Optimal convergence in the $L^2$-norm]
\label{thm:ell2}
Let $u$ be the solution of \ccref{eq:ell} and $u_h$ the solution of \ccref{eq:ellh}. Assume $c\in\mathcal{C}^{p+1}(\overline\Omega)$, $u\in H^{k_u}(\Omega)$, and $v\in H^{k_v}(\Omega)$, with $k_u,k_v\geq 2$, and assume the regularity condition \ccref{eq:reg} holds. If conditions C1-C8 are satisfied, then
\begin{align}
\label{eq:ell2}
\|u-u_h\|_{0} &\leq Ch^{\min(p+1,k_u,k_v)}(\|u\|_{\min(p+1,k_u)} + \|v\|_{\min(p+1,k_v)}),
\end{align}
with $p \geq 2$ the degree of the finite element method.
\end{thm}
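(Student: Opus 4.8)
\emph{Approach.} I would prove \ccref{eq:ell2} by an Aubin--Nitsche duality argument that reduces the $L^2$-estimate to the $H^1$-estimate of Theorem \ref{thm:ell1} together with the integration-error bounds of Lemmas \ref{lem:int1} and \ref{lem:int2}. Set $e:=u-u_h$ and introduce the dual solution $z\in H^1_0(\Omega)$ of $(c\nabla w,\nabla z)=(e,w)$ for all $w\in H^1_0(\Omega)$. By the regularity assumption \ccref{eq:reg} and the symmetry of the bilinear form, $z\in H^2(\Omega)$ with $\|z\|_2\le C\|e\|_0$, and testing the dual problem with $w=e$ gives the starting identity $\|e\|_0^2=(c\nabla e,\nabla z)$.

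\emph{Key identity.} First I would insert the interpolant $I_hz\in U_h$ and, exactly as in the proof of Theorem \ref{thm:ell1}, rewrite the cross term by subtracting the discrete equation \ccref{eq:ellh} (tested with $I_hz$) from the continuous equation \ccref{eq:ell} (tested with $I_hz$) and inserting the two quadrature rules. This yields
\[
\|e\|_0^2=(c\nabla e,\nabla(z-I_hz))+r_h(v,I_hz)-r_h'(c\nabla u_h,\nabla I_hz),
\]
so that $\|e\|_0^2$ decomposes into an interpolation term, the mass-matrix integration error $r_h(v,I_hz)$, and the stiffness-matrix integration error $r_h'(c\nabla u_h,\nabla I_hz)$.

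\emph{The two routine terms.} For the interpolation term I would use the boundedness of $c$, the Cauchy--Schwarz inequality, the $H^1$-bound $\|e\|_1\le Ch^{\min(p,k_u-1,k_v)}(\cdots)$ of Theorem \ref{thm:ell1}, and $\|z-I_hz\|_1\le Ch\|z\|_2$ from Lemma \ref{lem:pre3}; together with $\|z\|_2\le C\|e\|_0$ this produces a factor $\|e\|_0$ and the exponent $1+\min(p,k_u-1,k_v)\ge\min(p+1,k_u,k_v)$. For $r_h(v,I_hz)$ I would apply the second bound of Lemma \ref{lem:int1} with $w=I_hz$, using the stability $\|I_hz\|_{\mathcal{T}_h,2}\le C\|z\|_2\le C\|e\|_0$, giving the exponent $\min(p+1,k_v)$. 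Both contributions are therefore of the required order, and with the correct right-hand side norms.

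\emph{Main obstacle.} The delicate term, and the crux of the proof, is the stiffness integration error $r_h'(c\nabla u_h,\nabla I_hz)$. One cannot simply feed $c\nabla u_h$ into Lemma \ref{lem:int2}, because the higher broken Sobolev norms of the discrete solution $u_h$ are not controlled by the data, and estimating them by inverse inequalities would cancel every positive power of $h$. My plan is to transfer the regularity onto the (interpolated) exact solution by writing $u_h=I_hu-e_h$, splitting $r_h'(c\nabla u_h,\nabla I_hz)=r_h'(c\nabla I_hu,\nabla I_hz)-r_h'(c\nabla e_h,\nabla I_hz)$, and then, for the smooth factor, exploiting the extra assumption $c\in\mathcal{C}^{p+1}(\overline\Omega)$ (one derivative more than in Theorem \ref{thm:ell1}) together with the sharper estimate \ccref{eq:int2b} of Lemma \ref{lem:int2}, in which the dual test field enters only through $\|\nabla I_hz\|_{\mathcal{T}_h,1}\le C\|z\|_2\le C\|e\|_0$. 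It is this first-order (rather than zeroth-order) dependence on the smooth dual --- reflected also in the $L^2$-orthogonality of the projection $\Pi_{h,p-1}$ used in the C8-reduction $r_h'(\cdot,\nabla I_hz)=r_h'((I-\Pi_{h,p-1})\cdot,\nabla I_hz)$ --- that supplies the additional power of $h$ needed to pass from the $H^1$-rate $\min(p,k_u-1,k_v)$ to the $L^2$-rate $\min(p+1,k_u,k_v)$; the residual factor $r_h'(c\nabla e_h,\nabla I_hz)$ is then absorbed using $\|e_h\|_1\le C(\|e\|_1+\|u-I_hu\|_1)$ from Theorem \ref{thm:ell1} and Lemma \ref{lem:pre3}. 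Collecting the three estimates and dividing by $\|e\|_0$ gives \ccref{eq:ell2}. I expect the technical heart to lie precisely here: extracting the extra power of $h$ from the quadrature error while keeping no uncontrolled high-order norm of $u_h$, matching the stated regularity $\|u\|_{\min(p+1,k_u)}$ on the right-hand side, and checking that the low-regularity end cases $k_u=2$ and $k_v=2$ are covered by the same argument.
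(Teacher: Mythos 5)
Your overall skeleton (the duality argument, the decomposition $\|e\|_0^2=(c\nabla e,\nabla(z-I_hz))+r_h(v,I_hz)-r_h'(c\nabla u_h,\nabla I_hz)$, and the treatment of the interpolation term and of $r_h(v,I_hz)$) coincides with the paper's proof, and you correctly isolate $r_h'(c\nabla u_h,\nabla I_hz)$ as the crux. However, your resolution of that term has a genuine gap. Writing $u_h=I_hu-e_h$ and feeding $c\nabla I_hu$ into \ccref{eq:int2b} does not work, because the interpolant suffers from exactly the defect you invoked against $u_h$ itself: $I_hu$ lies in the enriched space $U(\mathcal{T}_h,\tilde U)$, whose elements contain bubble components of degree higher than $p$, so the factor $\|c\nabla I_hu\|_{\mathcal{T}_h,p+1}$ produced by \ccref{eq:int2b} is not controlled by $\|u\|_{\min(p+1,k_u)}$. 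Lemma \ref{lem:pre3} bounds interpolation errors only in broken norms of order $m\le\min(p+1,k_u)$, never of order $p+2$ (which is what $\|\nabla I_hu\|_{\mathcal{T}_h,p+1}$ involves), and bounding the high-order derivatives of the bubble part by inverse estimates costs a factor $h^{-(p+1)}$ against an $L^2$-smallness of only $h^{\min(p,k_u-1)}$; the net rate is then $h^{\min(p,k_u-1)}$, one power of $h$ short of \ccref{eq:ell2}. The second half of your plan has the same defect: estimating $r_h'(c\nabla e_h,\nabla I_hz)$ through $\|e_h\|_1\|z\|_2$ (Cauchy--Schwarz plus the norm equivalences of Lemma \ref{lem:pre1}) yields only $h^{\min(p,k_u-1,k_v)}\|e\|_0$, and you give no mechanism that gains the additional power of $h$ for this residual term; the $\Pi_{h,p-1}$-reduction you mention is already what produces Lemma \ref{lem:int2} and does not help here, since $D\tilde U\not\subset\mathcal{P}_{p-1}$.

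The paper's proof supplies the two missing devices. First, in place of $\nabla I_hu$ it uses the low-degree $L^2$ projection $\Pi_{h,q}\nabla u$ with $q:=\min(p-1,k_u-2)$: this is piecewise polynomial of degree $q\le p-1$, so its derivatives of order greater than $q$ vanish identically, giving $\|\Pi_{h,q}\nabla u\|_{\mathcal{T}_h,p+1}=\|\Pi_{h,q}\nabla u\|_{\mathcal{T}_h,q}\le C\|u\|_{\min(p,k_u-1)}$ with no inverse estimate and no extra regularity of $u$; combined with $c\in\mathcal{C}^{p+1}(\overline\Omega)$, inequality \ccref{eq:int2b} then gives the clean $h^{p+1}$ bound for $R_2:=r_h'(c\Pi_{h,q}\nabla u,\nabla I_hz_h)$. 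Second, the remainder is never written in your form $r_h'(c\nabla e_h,\nabla I_hz)$; instead, C8 (exactness on $\mathcal{P}_{p-1}\otimes D\tilde U$, which allows subtracting the piecewise-constant field $\Pi_{h,0}(c\nabla I_hz_h)$, since it pairs exactly with $\nabla u_h$ and with $\Pi_{h,q}\nabla u$) turns it into $R_1:=r_h'\bigl(\nabla u_h-\Pi_{h,q}\nabla u,\,c\nabla I_hz_h-\Pi_{h,0}c\nabla I_hz_h\bigr)$, a product of two small factors: the first is $O(h^{\min(p,k_u-1,k_v)})$ in $L^2$ by Theorem \ref{thm:ell1} and Lemma \ref{lem:pre3}, and the second is $O(h)\|z_h\|_2$ by Lemma \ref{lem:pre3}. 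It is this pairing of the $H^1$-small discrete factor with an $O(h)$-small dual factor, rather than any intrinsic smallness of $r_h'(c\nabla e_h,\nabla I_hz)$, that yields the extra power of $h$; without both devices your argument stalls at the $H^1$ rate.
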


\begin{proof}
Define $z_h\in H^1_0(\Omega)$ to be the solution of the elliptic problem
\begin{align*}
(c\nabla z_h,\nabla w) &= (u-u_h,w) &&\text{for all }w\in H_0^1(\Omega).
\end{align*}
From the regularity assumption it follows that $z_h\in H^2(\Omega)$ and 
\begin{align*}
\|z_h\|_2\leq C\|u-u_h\|_0.
\end{align*}
From the definition of $z_h$, it also follows that
\begin{align}
\label{eq:ell2a}
\|u-u_h\|_0^2 &= (c\nabla [u-u_h],\nabla z_h) \nonumber \\
&= (c\nabla [u-u_h],\nabla[z_h-I_hz_h]) + (c\nabla [u-u_h],\nabla I_hz_h).
\end{align}
We can bound the term $(c\nabla [u-u_h],\nabla[z_h-I_hz_h])$ as follows: 
\begin{align}
\label{eq:ell2b}
|(c\nabla [u-u_h],& \nabla[z_h-I_hz_h])| \leq C\|u-u_h\|_1\|z_h-I_hz_h\|_1 \nonumber \\
& \leq Ch^{\min(p,k_u-1,k_v)+1}(\|u\|_{\min(p+1,k_u)} + \|v\|_{\min(p,k_v)})\|z_h\|_2 \nonumber \\
& \leq Ch^{\min(p+1,k_u,k_v+1)}(\|u\|_{\min(p+1,k_u)} + \|v\|_{\min(p,k_v)})\|u-u_h\|_0,
\end{align}
where we used the Cauchy--Schwarz inequality and the boundedness of $c$ in the first line, Theorem \ref{thm:ell1} and Lemma \ref{lem:pre3} in the second line, and the regularity assumption in the last line. It then remains to find a bound for $(c\nabla [u-u_h], \nabla I_hz_h)$.

To do this, use \ccref{eq:ell} to write
\begin{align*}
(c\nabla u,\nabla I_hz_h) &= (v,I_hz_h)
\end{align*}
and use \ccref{eq:ellh} to write
\begin{align*}
(c\nabla u_h,\nabla I_hz_h) &= r_h'(c\nabla u_h, \nabla I_hz_h) + (c\nabla u_h, \nabla I_hz_h)_{\mathcal{Q}_h'} \\
&= r_h'(c\nabla u_h, \nabla I_hz_h) + (v, I_hz_h)_{\mathcal{Q}_h}.
\end{align*}
Subtracting these two equalities gives
\begin{align}
\label{eq:ell2c}
(c\nabla [u-u_h],\nabla I_hz_h) &=  -r_h'(c\nabla u_h, \nabla I_hz_h) + r_h(v, I_hz_h).
\end{align}
Now, set $q:=\min(p-1,k_u-2)$. We can write
\begin{align*}
r_h'(c\nabla u_h, \nabla I_hz_h)&= r_h'(\nabla u_h, c\nabla I_hz_h - \Pi_{h,0}c\nabla I_hz_h) \\
&= r_h'(\nabla u_h - \Pi_{h,q}\nabla u, c\nabla I_hz_h - \Pi_{h,0}c\nabla I_hz_h) \\
&\phantom{=-}+ r_h'(\Pi_{h,q}\nabla u, c\nabla I_hz_h - \Pi_{h,0}c\nabla I_hz_h) \\
&= r_h'(\nabla u_h - \Pi_{h,q}\nabla u, c\nabla I_hz_h - \Pi_{h,0}c\nabla I_hz_h)+ r_h'(c\Pi_{h,q}\nabla u, \nabla I_hz_h ) \\
&=: R_1 + R_2,
\end{align*}
where we used C8 for the first and third equality. We can bound $R_1$ as follows: 
\begin{align*}
|R_1| &\leq \|\nabla u_h-\Pi_{h,q}\nabla u\|_0 \| c\nabla I_hz_h - \Pi_{h,0}c\nabla I_hz_h \|_{0} \\
&\phantom{\leq } + |\nabla u_h-\Pi_{h,q}\nabla u|_{\mathcal{Q}_h'} | c\nabla I_hz_h - \Pi_{h,0}c\nabla I_hz_h |_{\mathcal{Q}_h'} \\
&\leq Ch \|\nabla u_h-\Pi_{h,q}\nabla u\|_0 \|z_h\|_{2} \\
&\leq Ch (\|\nabla u_h-\nabla u\|_0 + \|\nabla u-\Pi_{h,q}\nabla u\|_0) \|u-u_h\|_0 \\
&\leq Ch^{\min(p,k_u-1,k_v)+1}(\|u\|_{\min(p+1,k_u)} + \|v\|_{\min(p,k_v)}) \|u-u_h\|_0,
\end{align*}
where we used the Cauchy--Schwarz inequality for the first inequality, Lemma \ref{lem:pre1}, the regularity of $c$, and Lemma \ref{lem:pre3} for the second inequality, the triangle inequality and the regularity assumption for the third inequality, and Theorem \ref{thm:ell1} and Lemma \ref{lem:pre3} for the last inequality. We can also bound $R_2$ as follows:
\begin{align*}
|R_2| &\leq Ch^{p+1}\|c\Pi_{h,q}\nabla u\|_{\mathcal{T}_h,p+1}\|\nabla I_hz_h\|_{\mathcal{T}_h,1} \\
&\leq Ch^{p+1}\|\Pi_{h,q}\nabla u\|_{\mathcal{T}_h,p+1}\|I_hz_h\|_{\mathcal{T}_h,2} \\
&\leq Ch^{p+1}\|\Pi_{h,q}\nabla u\|_{\mathcal{T}_h,p+1}\|z_h\|_{2} \\
&= Ch^{p+1}\|\Pi_{h,q}\nabla u\|_{\mathcal{T}_h,q}\|z_h\|_2 \\
&\leq Ch^{p+1}\|\nabla u\|_{\mathcal{T}_h,q}\|z_h\|_2 \\
&\leq  Ch^{p+1}\|u\|_{\min(p,k_u-1)}\|u-u_h\|_0.
\end{align*}
Here, the first line follows from Lemma \ref{lem:int2} and the fact that $c\Pi_{h,q}\nabla u\in H^{p+1}(\mathcal{T}_h)^3$, the second line follows from the regularity of $c$, the third line follows from Lemma \ref{lem:pre3}, the fifth line follows from Lemma \ref{lem:pre3}, and the last line follows from the regularity assumption. The fourth line follows from the fact that $\Pi_{h,q}\nabla u$ is piecewise polynomial of degree $q$ and therefore $\|\Pi_{h,q}\nabla u\|_{\mathcal{T}_h,p+1}=\|\Pi_{h,q}\nabla u\|_{\mathcal{T}_h,q}$. By combining the bounds on $R_1$ and $R_2$, we then obtain
\begin{align}
|r_h'(c\nabla u_h, \nabla I_hz_h)| &= |R_1+R_2| \leq |R_1| + |R_2| \nonumber \\
&\leq Ch^{\min(p+1,k_u,k_v+1)}(\|u\|_{\min(p+1,k_u)} + \|v\|_{\min(p,k_v)})\|u-u_h\|_0. \label{eq:ell2d}
\end{align}
From Lemma \ref{lem:int1}, Lemma  \ref{lem:pre3}, and the regularity assumption, it also follows that
\begin{align}
\label{eq:ell2e}
|r_h(v, I_hz_h)| &\leq Ch^{\min(p+1,k_v)}\|v\|_{\min(p+1,k_v)}\|I_hz\|_{\mathcal{T}_h,2} \nonumber \\
&\leq Ch^{\min(p+1,k_v)}\|v\|_{\min(p+1,k_v)}\|z_h\|_2 \nonumber \\
&\leq Ch^{\min(p+1,k_v)}\|v\|_{\min(p+1,k_v)}\|u-u_h\|_0.
\end{align}
Combining \ccref{eq:ell2c}, \ccref{eq:ell2d}, and \ccref{eq:ell2e} gives
\begin{align*}
|(c\nabla [u-u_h],\nabla I_hz_h)| &\leq Ch^{\min(p+1,k_u,k_v)}(\|u\|_{\min(p+1,k_u)} + \|v\|_{\min(p+1,k_v)})\|u-u_h\|_0.
\end{align*}
Combining this with \ccref{eq:ell2a} and \ccref{eq:ell2b} then gives \ccref{eq:ell2}.
\end{proof}

These results can be used to prove optimal convergence for the wave equation in a way analogous to \cite[Chapter 4.6]{geevers18b} by replacing $a(u,w)$ by $a_h(u,w):=(c\nabla u,\nabla w)_{\mathcal{Q}_h'}$ and by defining the projection operator $\pi_h$ of \cite{geevers18b} such that $a_h(\pi_hu,w)=(\nabla\cdot c\nabla u,w)_{\mathcal{Q}_h}$ for all $w\in U_h$.

\subsection{Error estimates for the linear elastic case}
So far, we only analyzed the scalar wave equation, but we can obtain error estimates for the elastic wave equations in an analogous way. 

In the linear elastic case, the wave field $\vu:\Omega\times(0,T)\rightarrow\mathbb{R}^3$ is a vector field and \ccref{eq:modela} becomes
\begin{align*}
\rho\partial_t^2\vu &= \nabla\cdot C:\nabla \vu + \vct{f} &&\text{in }\Omega\times(0,T),
\end{align*}
with $[\nabla\cdot C:\nabla \vu]_{i}=\sum_{j,k,l=1}^3 \partial_jC_{jilk}\partial_ku_l$, where $C:\Omega\rightarrow\mathbb{R}^{3\times 3\times 3\times 3}$ is the elastic tensor field with symmetries $C_{ijkl}=C_{jikl}=C_{ijlk}=C_{klij}$ and bounds 
\begin{align*}
c_0\|\tsigma + \tsigma^t \| &\leq \|C:\tsigma\| \leq c_1\|\tsigma + \tsigma^t\| &&\text{for all }\tsigma\in\mathbb{R}^{3\times 3},
\end{align*}
with $c_0$, $c_1$ strictly positive constants and $\|\tsigma\|^2:=\sum_{i,j=1}^3\sigma_{ij}^2$. 

The only part of the error analysis that requires some additional work in this case, is the second inequality of Lemma \ref{lem:pre2}. Instead of $\|\nabla u\|_{\mathcal{Q}_h'} \geq C\|\nabla u\|_0$, we need to show that, if conditions C1-C3, C6, and C7, are satisfied, then
\begin{align}
\label{eq:pre2el}
\|\nabla \vu_h + \nabla\vu_h^t \|_{\mathcal{Q}_h'} \geq C\|\nabla \vu_h + \nabla\vu_h^t\|_0 &&\text{for all }\vu_h\in U_h^3.
\end{align}
This result follows from the fact that $\tilde U$ is finite dimensional and from the relations
\begin{align*}
\|\nabla \vu_h + \nabla\vu_h^t \|^2_{\mathcal{Q}_h'} &= \sum_{e\in\mathcal{T}_h} \frac{|e|}{|\tilde e|} \big\| \ten{J}_e^{-1}\cdot (\tilde\nabla\tilde\vw_e + \tilde\nabla\tilde\vw_e^t) \cdot \ten{J}^{-t}_e \big\|_{\tilde{\mathcal{Q}}'}^2, \\
\|\nabla \vu_h + \nabla\vu_h^t \|^2_{0} &= \sum_{e\in\mathcal{T}_h} \frac{|e|}{|\tilde e|} \big\| \ten{J}_e^{-1}\cdot (\tilde\nabla\tilde\vw_e + \tilde\nabla\tilde\vw_e^t) \cdot \ten{J}^{-t}_e \big\|_{\tilde e}^2, 
\end{align*}
where $\ten{J}_e:=\nabla\phi_e$ is the Jacobian of the element mapping, $\ten{J}_e^{-t}$ denotes the transposed of $\ten{J}_e^{-1}$, $\tilde\vw_e:=\ten{J}_e\cdot(\vu_h\circ\phi_e)\in\tilde U^3$, and $\|\tilde{\tsigma} \|_{\tilde{\mathcal{Q}}'}^2:=\sum_{\tilde\vx\in\tilde{\mathcal{Q}}'} \omega_{\tilde \vx}' \|\tilde{\tsigma}(\tilde\vx)\|^2$.

Using the boundedness of $C$, \ccref{eq:pre2el}, and Korn's inequality, we can show that the bilinear operator for the elastic case $a_h(\vu,\vw):=(C:\nabla\vu,\nabla\vw)_{\mathcal{Q}_h'}$ is still coercive. The other parts of the error analysis are analogous to the scalar case.

\section{Dispersion analysis}
\label{sec:disp}
To test the effect of the new quadrature rules on the accuracy and time step size, we first analyze the dispersion properties of the resulting mass-lumped finite element method along the lines of \cite{geevers18a}. We consider a homogeneous unbounded domain with $\rho=c=1$ and consider plane waves of the form
\begin{align}
u(\vx,t)=e^{\im(\vkappa\cdot\vx - \omega t)}. %&&\vx\in\mathbb{R}^3, t\in\mathbb{R}
\label{eq:planeWave}
\end{align}
Here, $\im:=\sqrt{-1}$ denotes the imaginary number, $\vkappa$ denotes the wave vector, and $\omega$ denotes the angular velocity. We also let $\lambda=2\pi/|\vkappa|$ denote the wavelength and $c_P=\sqrt{c/\rho}=1$ denote the wave propagation speed. The angular velocity and wave propagation speed satisfy the relation $\omega = |\vkappa| c_P$.

\begin{figure}[h]
\centering
\begin{subfigure}[b]{0.45\textwidth}
  \includegraphics[width=\textwidth]{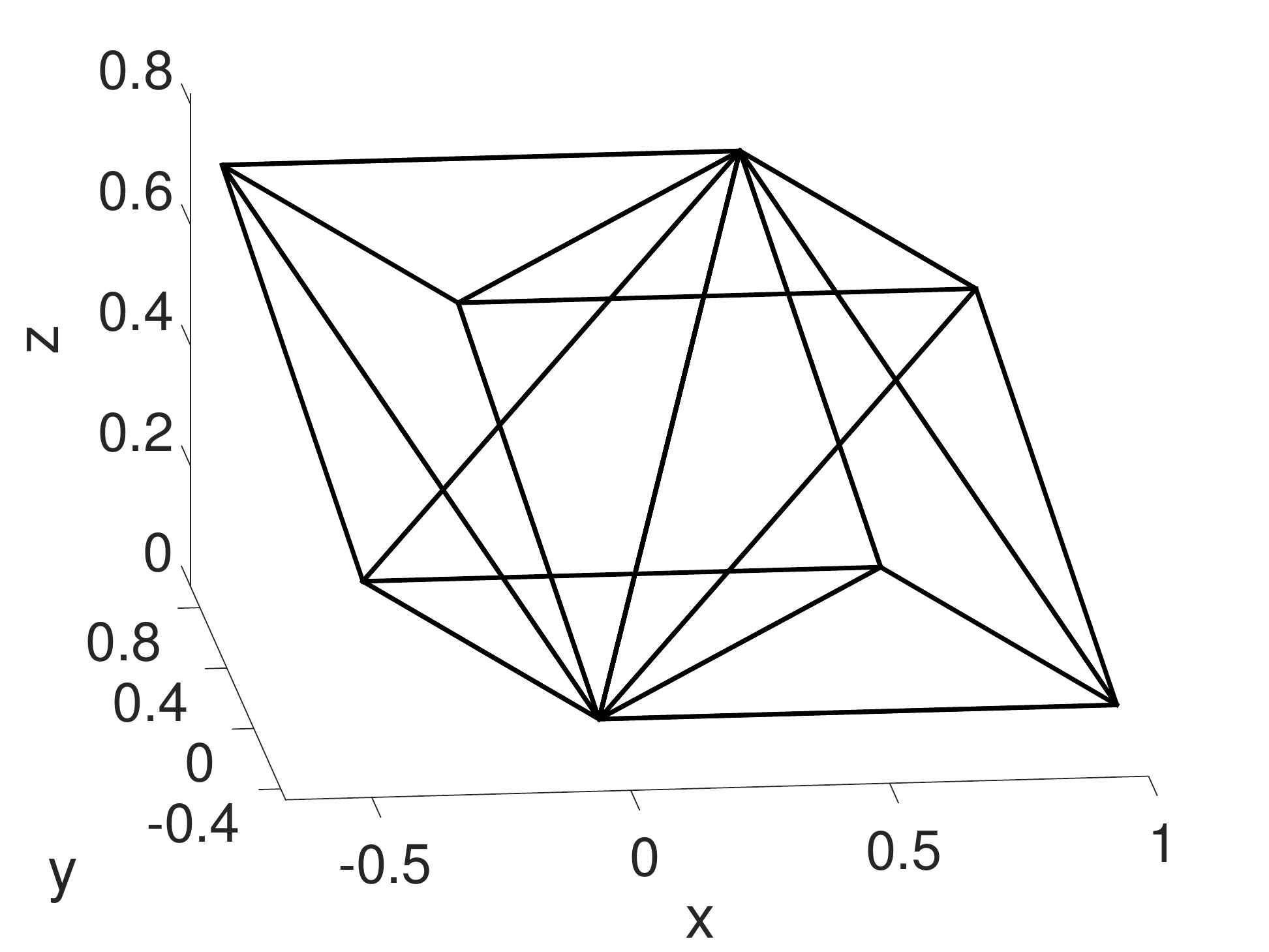}
\end{subfigure} \,\,
\begin{subfigure}[b]{0.45\textwidth}
  \includegraphics[width=\textwidth]{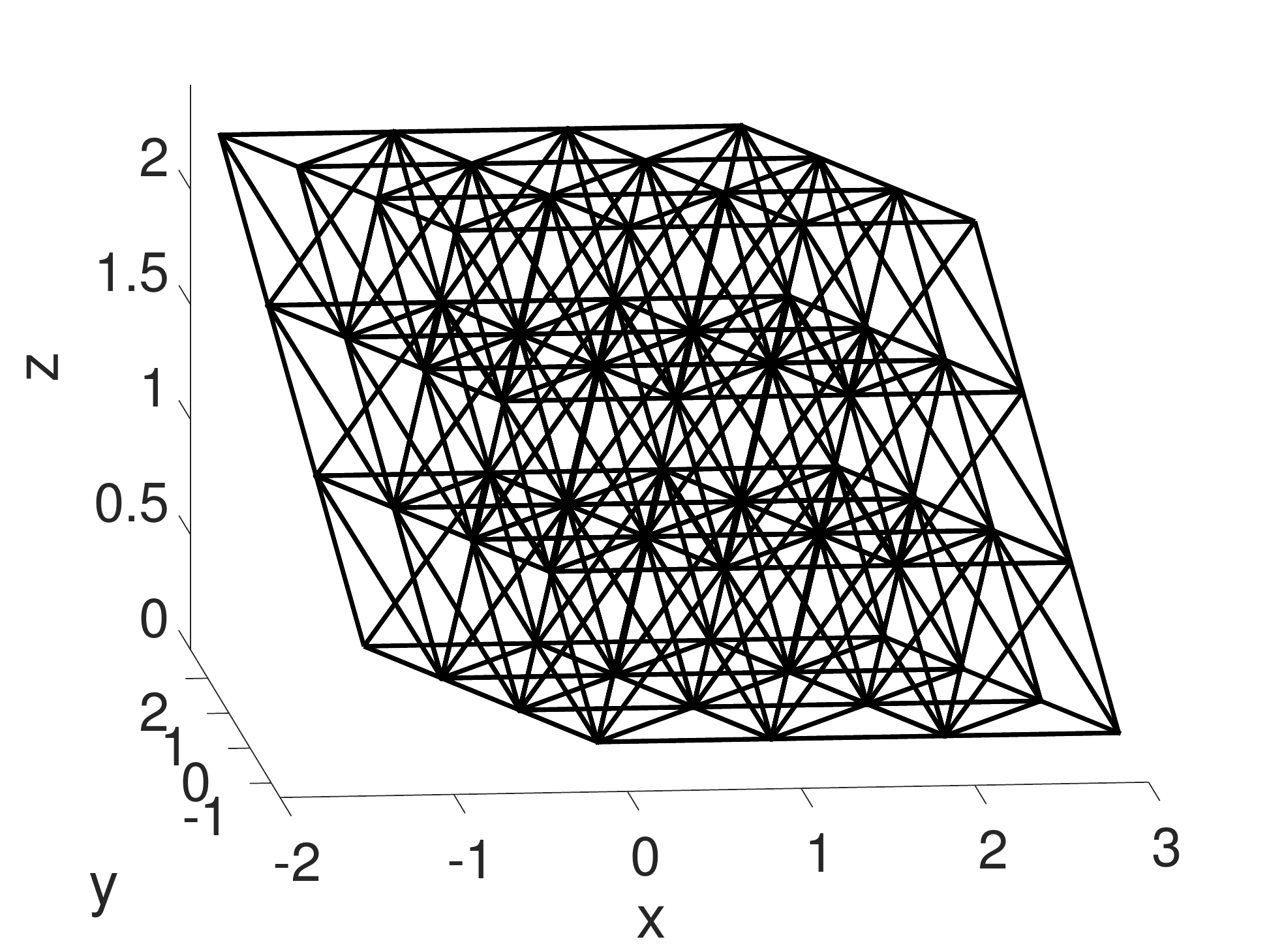}
\end{subfigure}
\caption{Single cell divided into 6 tetrahedra (left) and repetitions of this cell, resulting in the tetragonal disphenoid honeycomb (right).}
\label{fig:cells}
\end{figure}

To analyze the numerical dispersion, we consider a translation-invariant mesh constructed from a repeated cell pattern as illustrated in Figure \ref{fig:cells} and derive the propagation speeds $c_{P,h}$ of the numerical plane waves. The dispersion error $e_{disp}$ is defined as the error in the numerical wave propagation speed: $e_{disp}:=|c_P-c_{P,h}|/c_P$. Since the mesh is constructed from a repeated cell pattern, obtaining the numerical wave propagation speed for a given wave vector requires solving an eigenvalue problem related to only a single cell. 

To construct the translation-invariant mesh, we subdivide the unit cell $[0,1)$ into tetrahedra and repeat this pattern to pack the entire 3D space. We also apply a linear transformation $\vx \rightarrow \ten{T}\cdot\vx$, with $\ten{T}\in\mathbb{R}^{3\times 3}$ and $[\ten{T}\cdot\vx]_i = \sum_{j=1}^3 T_{ij}x_j$, to the mesh in order to obtain more regular tetrahedra. 

Let $\{\vx^{(\Omega_0,i)}\}_{i=1}^{n_0}$ denote all the nodes on $\Omega_0:=\ten{T}\cdot[0,1)^3$, let $\{\vx^{(\Omega_{\vct{k}},i)}\}_{i=1}^{n_0}$ denote the translated nodes on the translated cell $\Omega_{\vct{k}}=\ten{T}\cdot\vct{k}+\Omega_0$, and let $w^{(\Omega_{\vct{k}},i)}$ denote the corresponding nodal basis functions. We define matrices $M^{(\Omega_0)}, A^{(\Omega_0,\Omega_{\vct{k}})}\in\mathbb{R}^{n_0\times n_0}$ as follows:
\begin{align*}
M^{(\Omega_0)}_{ij} &= (\rho w^{(\Omega_0,i)},w^{(\Omega_0,j)})_{\mathcal{Q}_h}, &&i,j=1,\dots,n_0, \\
A^{(\Omega_0,\Omega_{\vct{k}})}_{ij} &= (c\nabla w^{(\Omega_0,i)},\nabla w^{(\Omega_0,j)})_{\mathcal{Q}_h'}, &&i,j=1,\dots,n_0, \vct{k}\in\{-1,0,1\}^3.
\end{align*}
For any wave vector $\vkappa\in\mathbb{R}^3$, we then define the matrix $S^{(\vkappa)}\in \mathbb{R}^{n_0\times n_0}$ as follows:
\begin{align*}
S^{(\vkappa)} &= \left(M^{(\Omega_0)}\right)^{-1}\left(\sum_{\vct{k}\in\{-1,0,1\}^3} e^{\im(\vkappa\cdot \ten{T}\cdot\vct{k})} A^{(\Omega_0,\Omega_{\vct{k}})}\right).
\end{align*}
When using an order-$2K$ Dablain time integration scheme \cite{dablain86}, with time step size $\Delta t$, the angular velocities of the numerical plane waves are given by
\begin{align*}
\omega_h^{(\vkappa,i)} &= \pm \frac{1}{\Delta t} \arccos\left( \sum_{k=0}^K \frac{1}{(2k)!}(-\Delta t^2s_h^{(\vkappa,i)})^k \right),
\end{align*}
where $\{s_{h}^{(\vkappa,i)}\}_{i=1}^{n_0}$ are the eigenvalues of $S^{(\vkappa)}$ \cite{geevers18a}. The numerical wave propagation speeds are given by $c_{P,h}^{(\vkappa,i)}=|\omega_h^{(\vkappa,i)}|/|\vkappa|$. The dispersion error, for a given wavelength $\lambda$, is then given by
\begin{align*}
e_{disp}^{(\lambda)} &:= \sup_{\vkappa\in\mathbb{R}^3, |\vkappa|=2\pi/\lambda} \left( \inf_{i=1,\dots,n_0} \frac{|c_P-c_{P,h}^{(\vkappa,i)}|}{c_P} \right).
\end{align*}

For the dispersion analysis, we consider a mesh of congruent nearly-regular isofacial tetrahedra, known as the tetragonal disphenoid honeycomb. This mesh is obtained by slicing the unit cube $[0,1)^3$ into 6 tetrahedra with the planes $x_1=x_2$, $x_2=x_3$, and $x_1=x_3$, and applying a linear transformation $\vx\rightarrow\ten{T}\cdot\vx$, with
\begin{align*}
\ten{T} &= \begin{bmatrix}
1 & -1/3 & -1/3 \\
0 & \sqrt{8/9} & -\sqrt{2/9} \\
0 & 0 & \sqrt{2/3}
\end{bmatrix}.
\end{align*}
An illustration of this mesh is given in Figure \ref{fig:cells}.

We plot the dispersion error for different elements and quadrature rules against the number of elements per wavelength $N_E :=(\lambda^3/|e|_{av})^{1/3}$, where $|e|_{av}=2\sqrt{3}/27$ denotes the average element volume. We also compute the largest allowed time step size, given by 
\begin{align*}
\Delta t=\sqrt{c_K/s_{h,max}},
\end{align*}
with $c_K$ a constant depending on the order of the time integration scheme ($c_K=4,12,7.57,21.48$ for $K=1,2,3,4$, respectively) and 
\begin{align*}
s_{h,max} &= \sup_{\vct{k}\in\mathcal{K}_0} \max_{i=1,\dots,n_0} s_h^{(\vkappa,i)}
\end{align*}
the largest eigenvalue of the discrete spatial operator, with $\mathcal{K}_0 := \ten{T}^{-t}\cdot [0,2\pi)$ the space of distinct wave vectors. Details on the dispersion analysis can be found in \cite{geevers18a}.

\begin{figure}[h]
\centering
\includegraphics[width=0.6\textwidth]{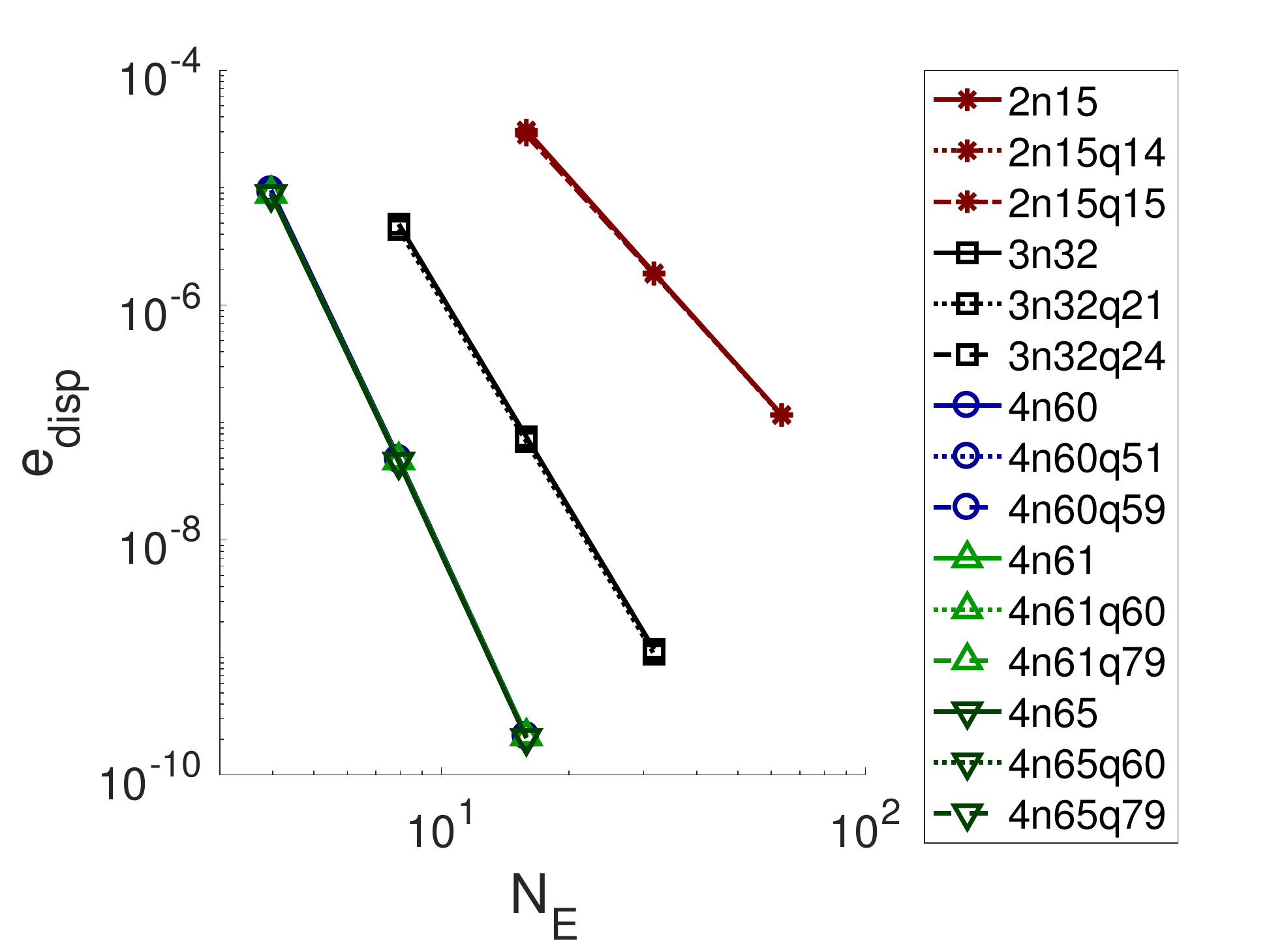}
\caption{Dispersion error $e_{disp}$ versus the number of elements per wave length $N_E$ for different mass-lumped finite element methods. In the legend, [$p$]n[$n$]q[$n'$] denotes the degree-$p$ mass-lumped finite element with $n$ nodes and  $n'$ quadrature points for evaluating the element stiffness matrix. Solid lines correspond to exact integration, dotted lines correspond to quadrature rules presented in this paper, and dashed lines to quadrature rules taken from \cite{zhang09}. Graph 2n15q15 corresponds to the degree-2 method using the mass matrix quadrature rule as stiffness matrix quadrature rule. Graphs of methods with the same polynomial degree are almost identical.}
\label{fig:errDisp}
\end{figure}

\begin{table}[h]
\caption{Dispersion error in terms of the number of elements per wavelength $N_E$, based on extrapolation of the graphs in Figure \ref{fig:errDisp}. The same notation as in the legend of Figure \ref{fig:errDisp} is used. Methods using the new quadrature rules presented in this paper are scripted in bold.}
\label{tab:errDisp}
\begin{center}
\begin{tabular}{l |l ||l |l ||l |l}
method		& $e_{disp}$ 		& method			& $e_{disp}$		 & method 		& $e_{disp}$ 	\\ \hline\hline
2n15 		& $1.89(N_E)^{-4}$ 	& 4n60			& $0.865(N_E)^{-8}$ & 4n65 			& $0.825(N_E)^{-8}$ \\
2n15q14		& $1.86(N_E)^{-4}$ 	& \textbf{4n60q51} 	& $0.842(N_E)^{-8}$ & \textbf{4n65q60} 	& $0.827(N_E)^{-8}$ \\ 
2n15q15		& $1.88(N_E)^{-4}$ 	& 4n60q59 		& $0.861(N_E)^{-8}$ & 4n65q79 		& $0.825(N_E)^{-8}$ \\ \hline
3n32			& $1.20(N_E)^{-6}$  & 4n61			& $0.854(N_E)^{-8}$ &&\\
\textbf{3n32q21}& $1.09(N_E)^{-6}$	& \textbf{4n61q60} 	& $0.854(N_E)^{-8}$ &&\\ 
3n32q24		& $1.19(N_E)^{-6}$	& 4n61q79 		& $0.851(N_E)^{-8}$ &&			
\end{tabular}
\end{center}
\end{table}

\begin{table}[h]
\caption{Largest allowed time step size for different mass-lumped finite element methods. The same notation as in the legend of Figure \ref{fig:errDisp} is used. Methods using the new quadrature rules presented in this paper are scripted in bold.}
\label{tab:dt}
\begin{center}
\begin{tabular}{l |l ||l |l ||l |l}
method		& $\Delta t$ 		& method			& $\Delta t$		 & method 		& $\Delta t$ 	\\ \hline\hline
2n15 		& $0.291$ 	& 4n60			& $0.0508$ & 4n65 			& $0.0932$ \\
2n15q14		& $0.280$ 	& \textbf{4n60q51} 	& $0.0580$ & \textbf{4n65q60} 	& $0.0936$ \\ 
2n15q15		& $0.181$ 	& 4n60q59 		& $0.0578$ & 4n65q79 		& $0.0935$ \\ \hline
3n32			& $0.128$		& 4n61			& $0.0721$ &&\\
\textbf{3n32q21}& $0.136$	& \textbf{4n61q60} 	& $0.0796$ &&\\ 
3n32q24		& $0.135$		& 4n61q79 		& $0.0792$ &&	
\end{tabular}
\end{center}
\end{table}

We test the degree-$p$ mass-lumped finite element methods presented in \cite{geevers18b} and given in Table \ref{tab:MLtet} using exact stiffness matrix evaluation and using the quadrature rules presented in this paper and quadrature rules that are accurate up to degree $p+p'-2$ from \cite{zhang09}, with $p'$ the highest polynomial degree of the enriched element space. For the degree-2 method, we also test using the quadrature rule of the mass matrix for evaluating the stiffness matrix. We combine each method with an order-$2p$ Dablain time integration scheme.

The dispersion error versus the number of elements per wavelength is shown in Figure \ref{fig:errDisp} and extrapolations of these graphs are given in Table \ref{tab:errDisp}. The figure and table show that the dispersion error of all methods converges with order $2p$, which is typical for eigenvalue and dispersion errors of symmetry-conserving finite element approximations, see for example, \cite{boffi10} and the references therein. The figure and table also show that there is hardly any difference in accuracy between the methods of the same degree and that the methods using a quadrature rule to evaluate the stiffness matrix have a nearly the same or even slightly smaller dispersion error than the methods using exact integration. 

The largest allowed time step size for each method is given in Table \ref{tab:dt}. The table shows that the quadrature rules for the stiffness matrix tested here hardly affect the largest allowed time step size, except for the degree-2 15-point mass matrix quadrature rule, which reduces the allowed time step size by more than a factor 1.5. For the other quadrature rules, the largest allowed time step size remains nearly the same or becomes even slightly larger.

If the stiffness matrix is evaluated with a quadrature rule and the resulting accuracy and time step size remain nearly the same, then the number of computations to obtain a given accuracy mainly depends on the number of quadrature points. Since the quadrature rules presented in this paper satisfy these properties and require less quadrature points than those currently available in the literature, they can result in a reduction of the computational cost proportionate to the reduction in number of quadrature points.

\section{Numerical tests}
\label{sec:tests}

\subsection{Algorithms for computing the element stiffness matrices}
\label{sec:alg}
Before we present the numerical tests, we first briefly describe the algorithms for computing the element stiffness matrices. In particular, we show how we efficiently compute the element stiffness matrix-vector products on the fly. We do not store the matrices, since this requires storing and fetching significantly more data, and since it was shown in \cite{mulder16} that an on-the-fly approach is more efficient for higher-degree elements.

To describe the algorithms, let $e\in\mathcal{T}_h$ be an arbitrary element. We introduce the following notation.
\begin{itemize}
  \item $\{\tilde\vx_i\}_{i=1}^n=\tilde{\mathcal{Q}}$: nodes on reference element $\tilde e$. Nodes of the different mass-lumped elements can be found in \cite{geevers18b}.
  \item $\tilde{w}_i$: nodal basis function corresponding to $\tilde\vx_i$.
  \item $w_i^{(e)}:=\tilde{w}_i\circ\phi_e^{-1}$: nodal basis function of the physical element.
  \item $\{\tilde\vx_i'\}_{i=1}^{n'}=\tilde{\mathcal{Q}}'$: quadrature points for the stiffness matrix on reference element $\tilde e$. Quadrature rules for the different elements are given in Section \ref{sec:quadRules}.
  \item $\tilde\omega_i'$: quadrature weight corresponding to $\tilde\vx_i'$.
  \item $A^{(e)}\in\mathbb{R}^{n\times n}$: the element stiffness matrix.
  \item $u^{(e)}$: the wave field on $e$.
  \item $\vvu^{(e)}\in\mathbb{R}^n$: the wave field at the nodes on $e$. 
\end{itemize} 
When using exact integration, the stiffness matrix-vector product $\vvv^{(e)}:=A^{(e)}\vvu^{(e)}\in\mathbb{R}^n$ is given by
\begin{align}
\label{eq:Aue}
[A^{(e)}\vvu^{(e)}]_i &= \int_e c\nabla w_i^{(e)}\cdot \nabla u^{(e)} \;dx,
\end{align}
for $i=1,\dots,n$. After rewriting the integral as an integral over the reference element, this becomes
\begin{align}
\label{eq:AueRef}
[A^{(e)}\vvu^{(e)}]_i &= \int_{\tilde e} \tilde\nabla\tilde{w}_i\cdot \tilde{\ten{c}}^{(e)} \cdot \tilde\nabla \tilde{u}^{(e)} \;d\tilde x, 
\end{align}
where $\tilde{u}^{(e)}:=u^{(e)}\circ\phi_e$, $\tilde\nabla$ is the gradient operator in reference coordinates, and $\tilde{\ten{c}}^{(e)}:=(c\circ\phi_e)\frac{|e|}{|\tilde e|}\ten{J}_e^{-t}\cdot\ten{J}_e^{-1}$ is a tensor field, with $\ten{J}_e:=\nabla\phi_e$ the Jacobian of the element mapping and $\ten{J}_e^{-t}$ the transposed of $\ten{J}_e^{-1}$. When $c$ is constant within each element, then $\tilde{\ten c}^{(e)}$ is also constant and we can compute \ccref{eq:AueRef} using the algorithm of  \cite{mulder16}:
\begin{align}
\label{eq:Au1}
[A^{(e)}\vvu^{(e)}]_i &= \sum_{i_D,j_D=1}^3 \tilde{c}^{(e)}_{i_D,j_D} \left(\sum_{j=1}^n B^{(i_D,j_D)}_{ij}\vvu^{(e)}_j\right),
\end{align}
where $B^{(i_D,j_D)}\in\mathbb{R}^{n\times n}$ are precomputed matrices, given by
\begin{align*}
B^{(i_D,j_D)}_{ij} &= \int_{\tilde e} (\tilde\partial_{i_D}\tilde{w}_i)(\tilde\partial_{j_D}\tilde{w}_j) \;d\tilde x,
\end{align*}
for $i_D,j_D=1,2,3$, $i,j=1,\dots,n$, with $\tilde\partial_{i_D}$ the derivative in reference coordinate $i_D$. We can reduce the number of computations in \ccref{eq:Au1} using the fact that $\tilde{\ten c}^{(e)}$ is symmetric:
\begin{align}
[A^{(e)}\vvu^{(e)}]_i &= \sum_{i_D=1}^3\sum_{j_D=1}^{i_D} \tilde{c}^{(e)}_{i_D,j_D} \left(\sum_{j=1}^n \hat{B}^{(i_D,j_D)}_{ij}\vvu^{(e)}_j\right),
\end{align}
where $\hat{B}^{(i_D,j_D)}:=B^{(i_D,j_D)}+B^{(j_D,i_D)}$ if $i_D\neq j_D$ and $\hat{B}^{(i_D,j_D)}:=B^{(i_D,j_D)}$ when $i_D=j_D$. The complete algorithm can then be described as follows:
\begin{enumerate}[{A}1.]
  \item Compute $\epsilon^{(i_D,j_D)}\in\mathbb{R}^n$ for $i_D=1,2,3$, $j_D\leq i_D$: 
  \begin{align*}
  \epsilon^{(i_D,j_D)}_i=\sum_{j=1}^n \hat{B}^{(i_D,j_D)}_{ij}\vvu^{(e)}_j.
  \end{align*}
  \item Compute $A^{(e)}\vvu^{(e)}\in\mathbb{R}^n$: 
  \begin{align*}
  [A^{(e)}\vvu^{(e)}]_i = \sum_{i_D=1}^3\sum_{j_D=1}^{i_D} \tilde{c}^{(e)}_{i_D,j_D}\epsilon^{(i_D,j_D)}_i.
  \end{align*}
\end{enumerate}
The computational work is dominated by the first step where 6 matrix-vector products with matrices of size $n\times n$ are computed.

Alternatively, we can compute $A^{(e)}\vvu^{(e)}$ by evaluating the integrals with a quadrature rule. Equation \ccref{eq:AueRef} then becomes
\begin{align}
\label{eq:AuNeRef}
[A^{(e)}\vvu^{(e)}]_i &= \sum_{k=1}^{n'} \tilde\nabla\tilde{w}_i(\tilde{\vx}_k')\cdot \tilde{\ten{c}}^{(e,k)} \cdot \tilde\nabla \tilde{u}^{(e)}(\tilde{\vx}_k'), 
\end{align}
where $\tilde{\ten{c}}^{(e,k)}:= \tilde{\omega}_k'\tilde{\ten{c}}^{(e)}(\tilde{\vx}_k') \in\mathbb{R}^{3\times 3}$. We can compute this as follows:
\begin{align}
[A^{(e)}\vvu^{(e)}]_i &=  \sum_{k=1}^{n'}\sum_{i_D=1}^3  D^{(i_D)}_{ki} \left(\sum_{j_D=1}^3 \tilde{c}^{(e,k)}_{i_D,j_D} \left(\sum_{j=1}^nD^{(j_D)}_{kj}\vvu^{(e)}_j \right)\right),
\end{align}
where $D^{(i_D)}\in\mathbb{R}^{n'\times n}$ are precomputed matrices, given by
\begin{align*}
D^{(i_D)}_{ki} &= (\tilde\partial_{i_D}\tilde{w}_{i})(\tilde{\vx}_k').
\end{align*}
The complete algorithm can be described as follows:
\begin{enumerate}[{B}1.]
  \item Compute $\epsilon^{(j_D)}\in\mathbb{R}^{n'}$ for $j_D=1,2,3$:
  \begin{align*}
  \epsilon^{(j_D)}_k &= \sum_{j=1}^nD^{(j_D)}_{kj}\vvu^{(e)}_j.
  \end{align*}
  \item Compute $\sigma^{(i_D)}\in\mathbb{R}^{n'}$ for $i_D=1,2,3$:
  \begin{align*}
  \sigma^{(i_D)}_k &= \sum_{j_D=1}^3 \tilde{c}^{(e,k)}_{i_D,j_D} \epsilon^{(j_D)}_k.
  \end{align*}
  \item Compute $A^{(e)}\vvu^{(e)}\in\mathbb{R}^n$:
  \begin{align*}
  [A^{(e)}\vvu^{(e)}]_i &= \sum_{i_D=1}^3 \left(\sum_{k=1}^{n'}  D^{(i_D)}_{ki} \sigma^{(i_D)}_k  \right).
  \end{align*}
\end{enumerate}
The computational work for this algorithm is dominated by the first and third step, which both require 3 matrix-vector products with matrices of size $n'\times n$, so 6 of these matrix-vector products in total. Since $n'<n$ for all the quadrature rules presented in this paper, this number of computations is smaller than for the previous algorithm, although only slightly. However, as we will show next, this quadrature-based algorithm is significantly more efficient than the exact-integral algorithm in case of linear elasticity. Moreover, this quadrature-based algorithm also works if $c$ varies within the element. 

In case of linear elasticity, the wave field $\vu:\Omega\times(0,T)\rightarrow \mathbb{R}^3$ is a vector field and the term $c\nabla u$ becomes the stress tensor $C:\nabla\vu$, with $C\in\mathbb{R}^{3\times 3\times 3\times 3}$ the order-four elasticity tensor with symmetries $C_{ijkl}=C_{jikl}=C_{ijlk}=C_{klij}$ and $[C:\nabla\vu]_{ij}:=\sum_{k,l=1}^3C_{ijkl}\partial_lu_k$. The vector $\vvu^{(e)}\in\mathbb{R}^{3n}$ can in this case be written as a concatenation of three vectors $\vvu^{(e,1)}, \vvu^{(e,2)}, \vvu^{(e,3)}\in\mathbb{R}^n$, where $\vvu^{(e,i)}$ is the wave field component $u_{i}$ at the nodes on $e$. The parameter $\tilde{\ten c}^{(e)}$ becomes $\tilde{C}^{(e)}:=\frac{|e|}{|\tilde e|}\ten{J}_e^{-t}\cdot(C\circ\phi_e)\cdot\ten{J}_e^{-1}$, where $[\ten{J}_e^{-t}\cdot C \cdot\ten{J}_e^{-1}]_{ijkl}=\sum_{p,q=1}^3[\ten{J}_e^{-t}]_{ip}C_{pjkq}[\ten{J}_e^{-1}]_{ql}$, and $\tilde{\ten c}^{(e,k)}$ becomes $\tilde{C}^{(e,k)}:=\tilde{\omega}_k'\tilde{C}^{(e)}(\tilde{\vx}_k')$. The algorithm for computing the element stiffness matrix-vector product using exact integration then becomes
\begin{enumerate}[{A}1*.]
  \item Compute $\epsilon^{(i_D,j_D,j_V)}\in\mathbb{R}^n$ for $i_D,j_D,j_V=1,2,3$: 
  \begin{align*}
  \epsilon^{(i_D,j_D,j_V)}_i=\sum_{j=1}^n {B}^{(i_D,j_D)}_{ij}\vvu^{(e,j_V)}_j.
  \end{align*}
  \item Define $\vvv^{(e)}:=A^{(e)}\vvu^{(e)}$. Compute $\vvv^{(e,i_V)}\in\mathbb{R}^n$ for $i_V=1,2,3$: 
  \begin{align*}
  \vvv^{(e,i_V)}_i = \sum_{i_D,j_D,j_V=1}^3 \tilde{C}^{(e)}_{i_D,i_V,j_V,j_D}\epsilon^{(i_D,j_D,j_V)}_i.
  \end{align*}
\end{enumerate}
The computational work is again dominated by the first step, which now requires 27 matrix-vector products with matrices of size $n\times n$. 

When using a quadrature rule, the algorithm becomes
\begin{enumerate}[{B}1*.]
  \item Compute $\epsilon^{(j_D,j_V)}\in\mathbb{R}^{n'}$ for $j_D,j_V=1,2,3$:
  \begin{align*}
  \epsilon^{(j_D,j_V)}_k &= \sum_{j=1}^nD^{(j_D)}_{kj}\vvu^{(e,j_V)}_j.
  \end{align*}
  \item Compute $\sigma^{(i_D,i_V)}\in\mathbb{R}^{n'}$ for $i_D,i_V=1,2,3$:
  \begin{align*}
  \sigma^{(i_D,i_V)}_k &= \sum_{j_D,j_V=1}^3 \tilde{C}^{(e,k)}_{i_D,i_V,j_V,j_D} \epsilon^{(j_D,j_V)}_k.
  \end{align*}
  \item Define $\vvv^{(e)}:=A^{(e)}\vvu^{(e)}$. Compute $\vvv^{(e,i_V)}\in\mathbb{R}^n$ for $i_V=1,2,3$: 
  \begin{align*}
  \vvv^{(e,i_V)}_i &= \sum_{i_D=1}^3 \left(\sum_{k=1}^{n'}  D^{(i_D)}_{ki} \sigma^{(i_D,i_V)}_k  \right).
  \end{align*}
\end{enumerate}
The computational work for this algorithm is dominated again by the first and third step, which now both require 9 matrix-vector products with matrices of size $n'\times n$, so 18 of these matrix-vector products in total. The number of computations is therefore reduced by more than a factor 1.5 when compared to the algorithm based on exact integration. Furthermore, the quadrature-based algorithm can also handle tensor fields $C$ that vary within the element.

Both algorithms can be slightly improved by exploiting the fact that the rows and columns of the matrices $B^{(i_D,j_D)}$ and the columns of matrices $D^{(i_D)}$ sum to zero. Furthermore, in case of isotropic elasticity, steps A2* and B2* can be computed more efficiently by exploiting the simple structure of the elasticity tensor $C$.

In the next subsections, we demonstrate the superiority of the quadrature-based algorithm for the case of non-constant parameters and linear elasticity.

\subsection{Acoustic wave on a heterogeneous domain}
We first test the methods for an acoustic wave propagation problem with a heterogeneous domain. The acoustic wave equation is given by
\begin{align}
\label{eq:modelAc}
\frac{1}{\rho c^2}\partial_t^2 p &= \nabla\cdot \frac{1}{\rho}\nabla p, &&\text{in }\Omega\times(0,T),
\end{align}
with spatial domain $\Omega\subset\mathbb{R}^3$, time interval $(0,T)$, pressure field $p:\Omega\times(0,T)\rightarrow \mathbb{R}$, mass density $\rho:\Omega\rightarrow\mathbb{R}$, and acoustic wave speed $c:\Omega\rightarrow\mathbb{R}$. We choose $\Omega:=(-L_1,L_1)\times(-L_2,L_2)\times(-L_3,L_3)$ and impose zero Neumann boundary conditions. 

To construct an analytic solution, let $X_i:=x_i+\frac{a_i}{m_i}\cos(m_ix_i)$, for $i=1,2,3$, be distorted coordinates, with $m_i:=\frac12\pi/L_i$ and $a_i\in[0,1)$, and define $g_i:=\partial_iX_i=1-a_i\sin(m_ix_i)$. Also let $\rho_0\in\mathbb{R}$ be the average mass density, $c_0\in\mathbb{R}$ the average wave speed, $\vct{k}\in\mathbb{R}^3$ the wave vector, and $\omega:=c_0|\vct{k}|$ the angular velocity, and let parameters $\rho$ and $c$ be given by
\begin{align*}
\rho(\vx):=\rho_0g_1(x_1)g_2(x_2)g_3(x_3), \qquad c(\vx):=c_0\sqrt{\frac{k_1^2+k_2^2+k_3^2}{k_1^2g_1^2(x_1)+k_2^2g_2^2(x_2)+k_3^2g_3^2(x_3)}}.
\end{align*}
Then the standing wave, given by
\begin{align*}
p(\vx,t)=\cos(\omega t)\sin(k_1X_1)\sin(k_2X_2)\sin(k_3X_3),
\end{align*}
is a solution of \ccref{eq:modelAc} that satisfies the zero Neumann boundary conditions.

Now, set $L_i=1\;$km, $a_i=0.2$, $k_i=3m_i$, for $i=1,2,3$, and $c_0=2\;$km/s, $\rho_0 = 2\;$g/cm$^3$. To test the numerical methods, we use $p(\vx,0)$ and $\partial_tp(\vx,0)$ as initial conditions. We test on multiple unstructured meshes and simulate in time using a fourth-order time-stepping scheme \cite{dablain86} with time step size $\Delta t=0.99\Delta t_{\max}$, where $\Delta t_{\max} := \sqrt{12/\sigma_{\max}}$ is the largest allowed time step size \cite{geevers18a} and $\sigma_{\max}$ denotes the largest eigenvalue of the spatial operator, which is computed up to four decimals using power iteration. The root mean square (RMS) error is computed after two time oscillations, so at $T=4\pi/\omega\approx 0.7698\;$s.

\begin{table}[h]
\caption{Power-law fits of the left graphs of Figures \ref{fig:rmsAc1} and \ref{fig:rmsAc2}. Convergence rates are given in bold.}
\label{tab:errFit}
\begin{center}
\begin{tabular}{l |c |c}
				& \multicolumn{2}{c}{RMS error} \\ \hline
Method			& Figure \ref{fig:rmsAc1} 		& Figure \ref{fig:rmsAc2} \\ \hline\hline
2,4 15	 		& $(2.9\times 10^2)N^{(-1/3 \times \mathbf{3.2})}$	& $(1.2\times 10^1)N^{(-1/3 \mathbf{\times 2.4})}$ \\ 
3,4 32  			& $(1.9\times 10^3)N^{(-1/3 \times \mathbf{4.1})}$	& $(2.8\times 10^0)N^{(-1/3 \mathbf{\times 2.1})}$ \\  
4,4 60 			& $(5.8\times 10^4)N^{(-1/3 \times \mathbf{5.3})}$	& $(5.3\times 10^0)N^{(-1/3 \mathbf{\times 2.1})}$ \\  
4,4 61 			& $(7.9\times 10^4)N^{(-1/3 \times \mathbf{5.3})}$	& $(5.4\times 10^0)N^{(-1/3 \mathbf{\times 2.1})}$ \\ 
4,4 65		 	& $(7.3\times 10^4)N^{(-1/3 \times \mathbf{5.3})}$	& $(5.7\times 10^0)N^{(-1/3 \mathbf{\times 2.1})}$ 
\end{tabular}
\end{center}
\end{table}

\begin{figure}[h]
\centering
\begin{subfigure}[b]{0.45\textwidth}
  \includegraphics[width=\textwidth]{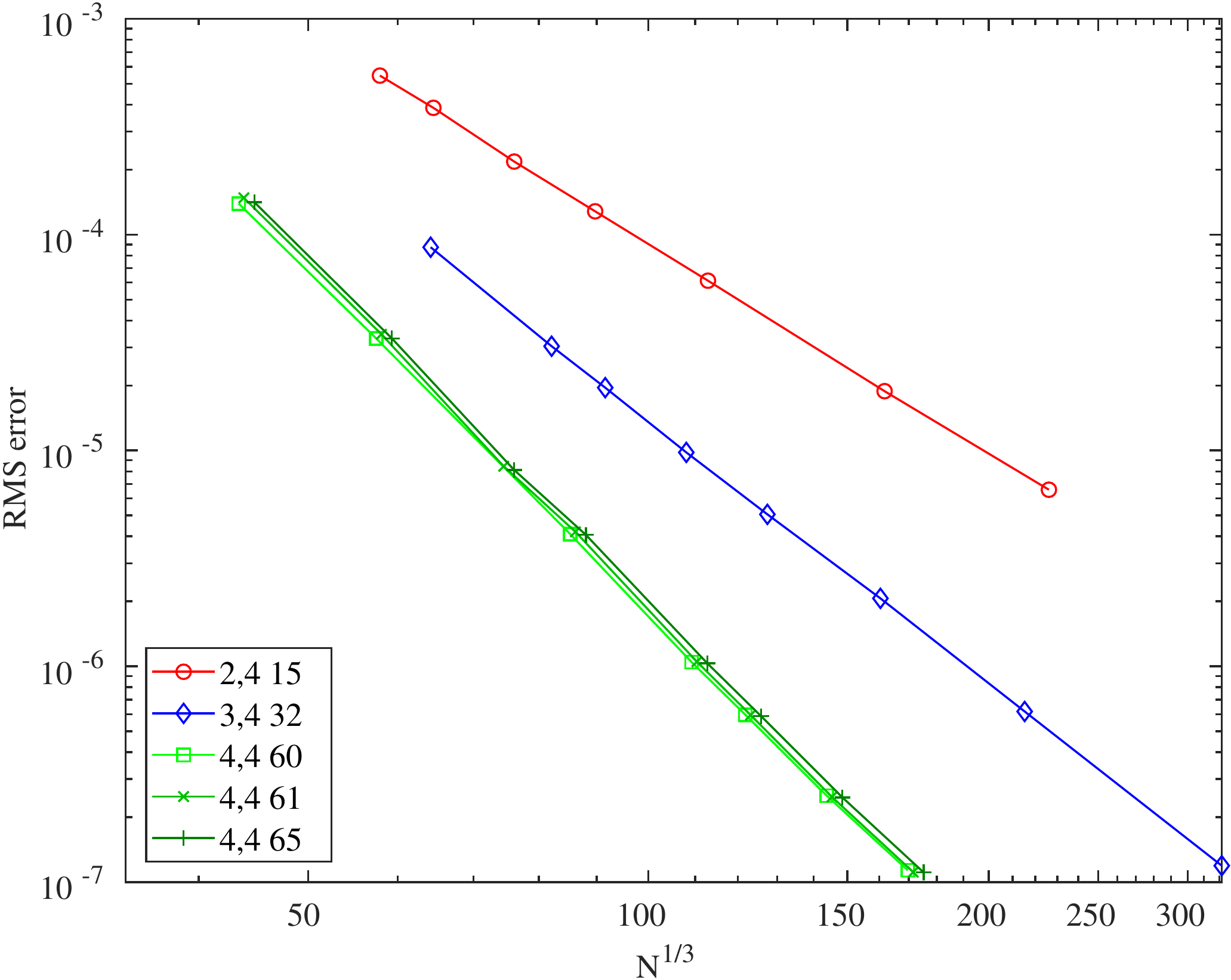}
\end{subfigure} \,\,
\begin{subfigure}[b]{0.45\textwidth}
  \includegraphics[width=\textwidth]{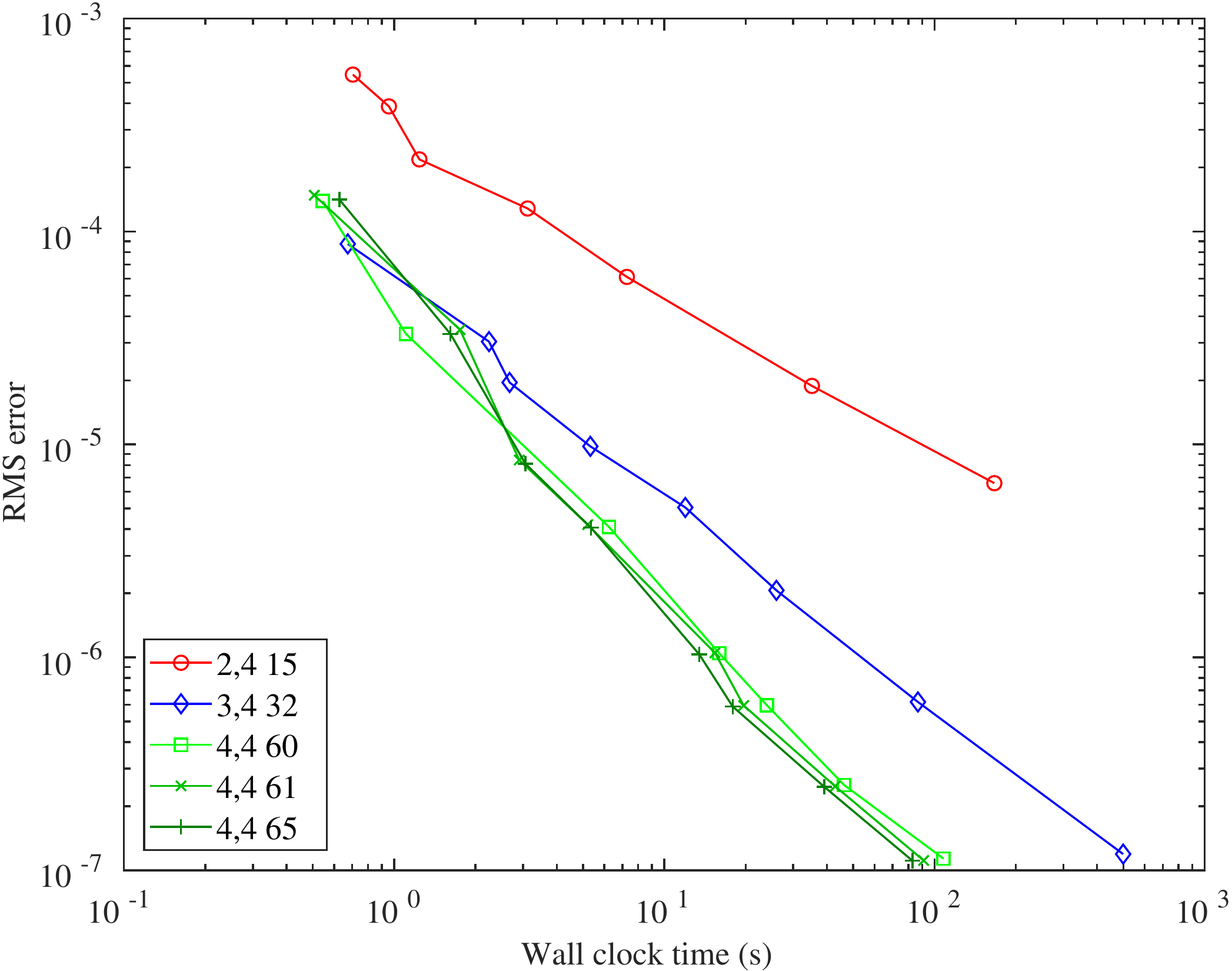}
\end{subfigure}
\caption{RMS errors for the acoustic test case as a function of the cube root of the number of degrees of freedom (left) and as a function of the wall clock time (right). In the legend, [$p,K\;n$] refers to the element of degree $p$ with $n$ nodes, combined with an order-$K$ time-stepping scheme. The element stiffness matrices were evaluated using a quadrature rule.}
\label{fig:rmsAc1}
\end{figure}

Figure \ref{fig:rmsAc1} shows the RMS error plotted against the cube root of the number of degrees of freedom $N$ and the wall-clock time for the mass-lumped tetrahedral element methods using the quadrature-based algorithm for the stiffness matrix as discussed in the previous subsection. The simulations shown here were performed with an OpenMP implementation on 24 cores of two Intel\textsuperscript{\textregistered{}} Xeon\textsuperscript{\textregistered{}} E5-2680 v3 CPUs running at 2.50GHz. Power-law fits of the left graph are also shown in Table \ref{tab:errFit}. This graph shows optimal convergence rates of order $p+1$ and thereby confirms the error estimates of Section \ref{sec:accuracy}. In particular, it confirms that optimal convergence rates are maintained, even though the spatial parameters $\rho$, $c$ vary within the element.

\begin{figure}[h]
\centering
\begin{subfigure}[b]{0.45\textwidth}
  \includegraphics[width=\textwidth]{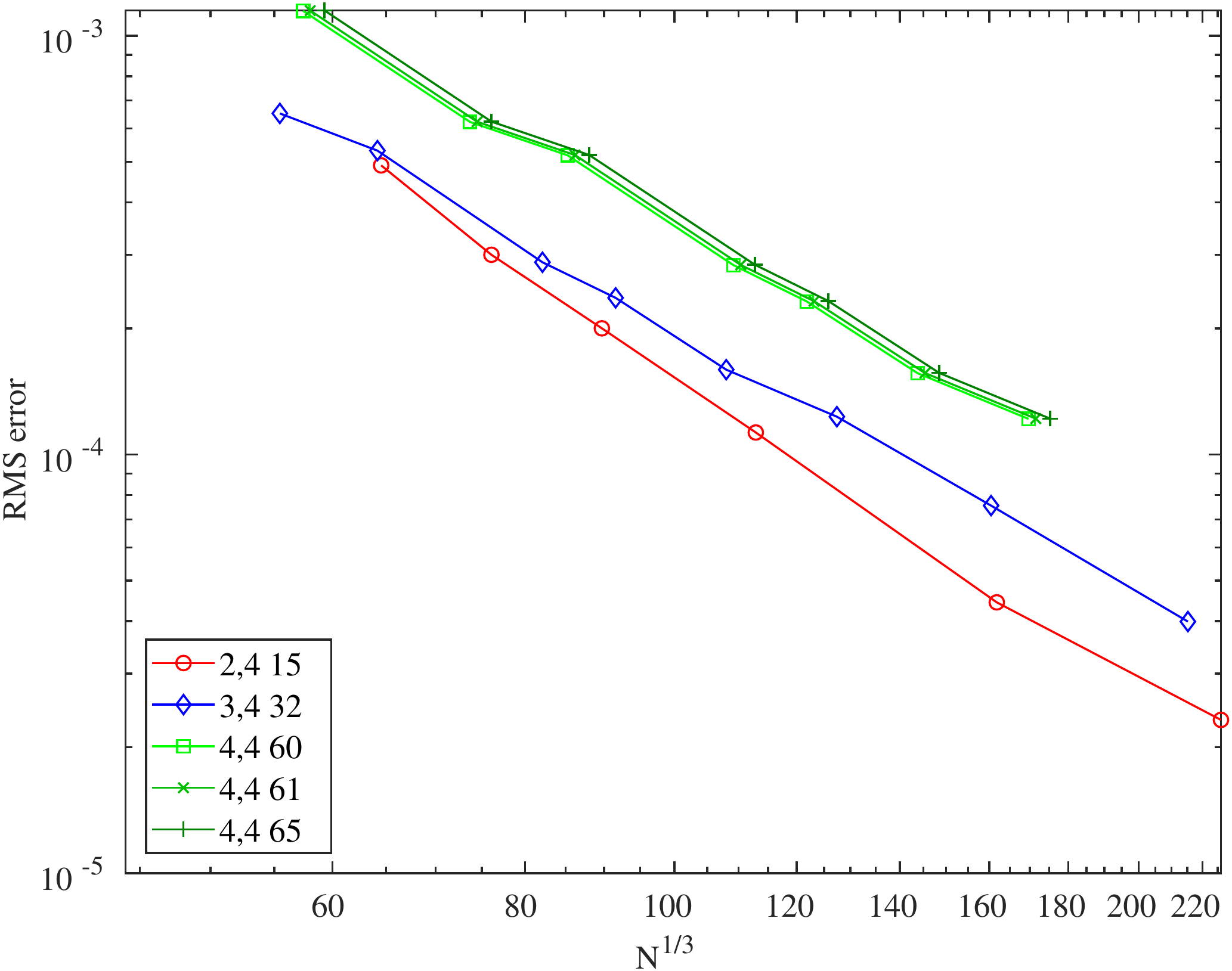}
\end{subfigure} \,\,
\begin{subfigure}[b]{0.45\textwidth}
  \includegraphics[width=\textwidth]{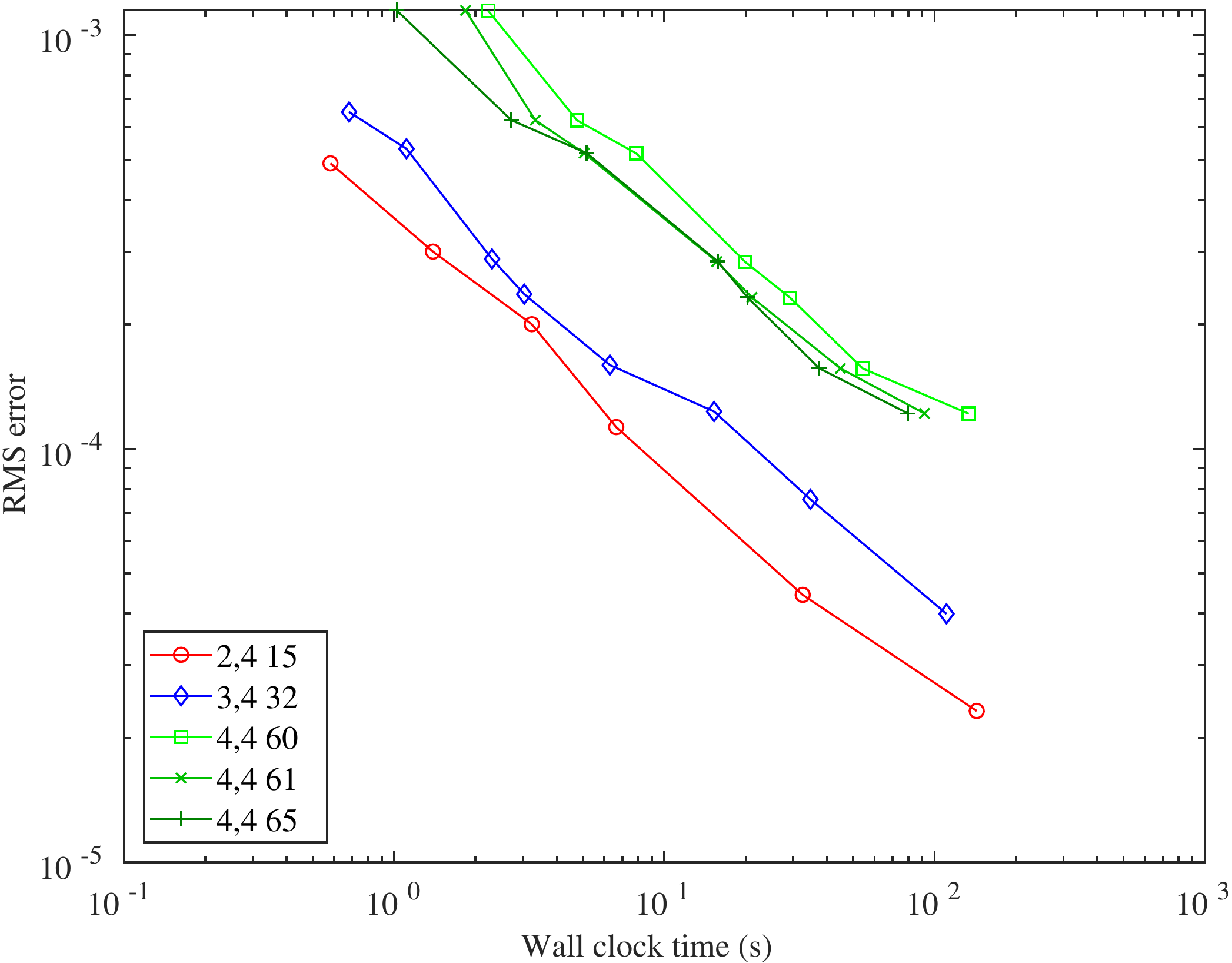}
\end{subfigure}
\caption{Same as Figure \ref{fig:rmsAc1}, but using exact integration to evaluate the stiffness matrix and using a piecewise-constant approximation of the mass density $\rho$. All methods only converge with second order due to the parameter approximation and higher-degree methods only result in more degrees of freedom and computation time.}
\label{fig:rmsAc2}
\end{figure}

Figure \ref{fig:rmsAc2} shows the same as Figure \ref{fig:rmsAc1} for the methods using exact integration to evaluate the stiffness matrix and using a piecewise constant approximation of the mass density $\rho$. Power-law fits of the left graph are again given in Table \ref{tab:errFit}. The graph shows that, due to the piecewise constant approximation, only second-order convergence rates are obtained. The higher-degree elements now only result in more computations per element, without any significant gain in accuracy. When comparing with Figure \ref{fig:rmsAc1}, it follows that the quadrature-based approach is much more efficient than using exact integration with piecewise-constant parameter approximations.

\subsection{Elastic wave on a homogeneous domain}
We also test the methods for an elastic wave propagation problem on a homogeneous domain. The elastic wave equations are given by
\begin{align*}
\rho\partial_t^2\vu &= \nabla\cdot C:\nabla\vu + \vct{f}, &&\text{in }\Omega\times (T_0,T_1),
\end{align*}
with $\vu:\Omega\times(T_0,T_1)\rightarrow\mathbb{R}^3$ the displacement field,  $\vct{f}:\Omega\times(T_0,T_1)\rightarrow\mathbb{R}^3$ the force field, $\rho:\Omega\rightarrow\mathbb{R}$ the mass density, and $C:\Omega\rightarrow\mathbb{R}^{3\times 3\times 3\times 3}$ the elasticity tensor. We consider an isotropic elastic medium, so $C:\nabla\vu=\lambda(\nabla\cdot\vu)\ten{I} + \mu(\nabla\vu+\nabla\vu^t)$, with $\ten{I}\in\mathbb{R}^{3\times 3}$ the identity tensor, $\nabla\vu^t$ the transposed of $\nabla\vu$, and $\lambda,\mu:\Omega\rightarrow\mathbb{R}$ the Lam\'e parameters.

We choose domain $\Omega=[-2,2]\times[-1,1]\times[0,2]\;$km$^3$ with zero Neumann boundary conditions, and set the parameters with a constant mass density $\rho=2\;$g/cm$^3$, primary wave velocity $v_P:=\sqrt{(\lambda+2\mu)/\rho}=2\;$km/s, and secondary/shear wave velocity $w_S:=\sqrt{\mu/\rho}=1.2\;$km/s. A unit vertical force source with a 7-Hz Ricker-wavelet is placed at $\vx_{src}:=(0,0,1000)\;$m and receivers are placed between $x_r=-1375\;$m and $x_r=1375\;$m with a 50-m interval at $y_r=200\;$m and $z_r=800\;$m. The exact solution can be found in \cite{aki80}. The simulation time is chosen such that reflections caused by the boundary conditions do not reach the receivers.

\begin{figure}[h]
\centering
\begin{subfigure}[b]{0.45\textwidth}
  \includegraphics[width=\textwidth]{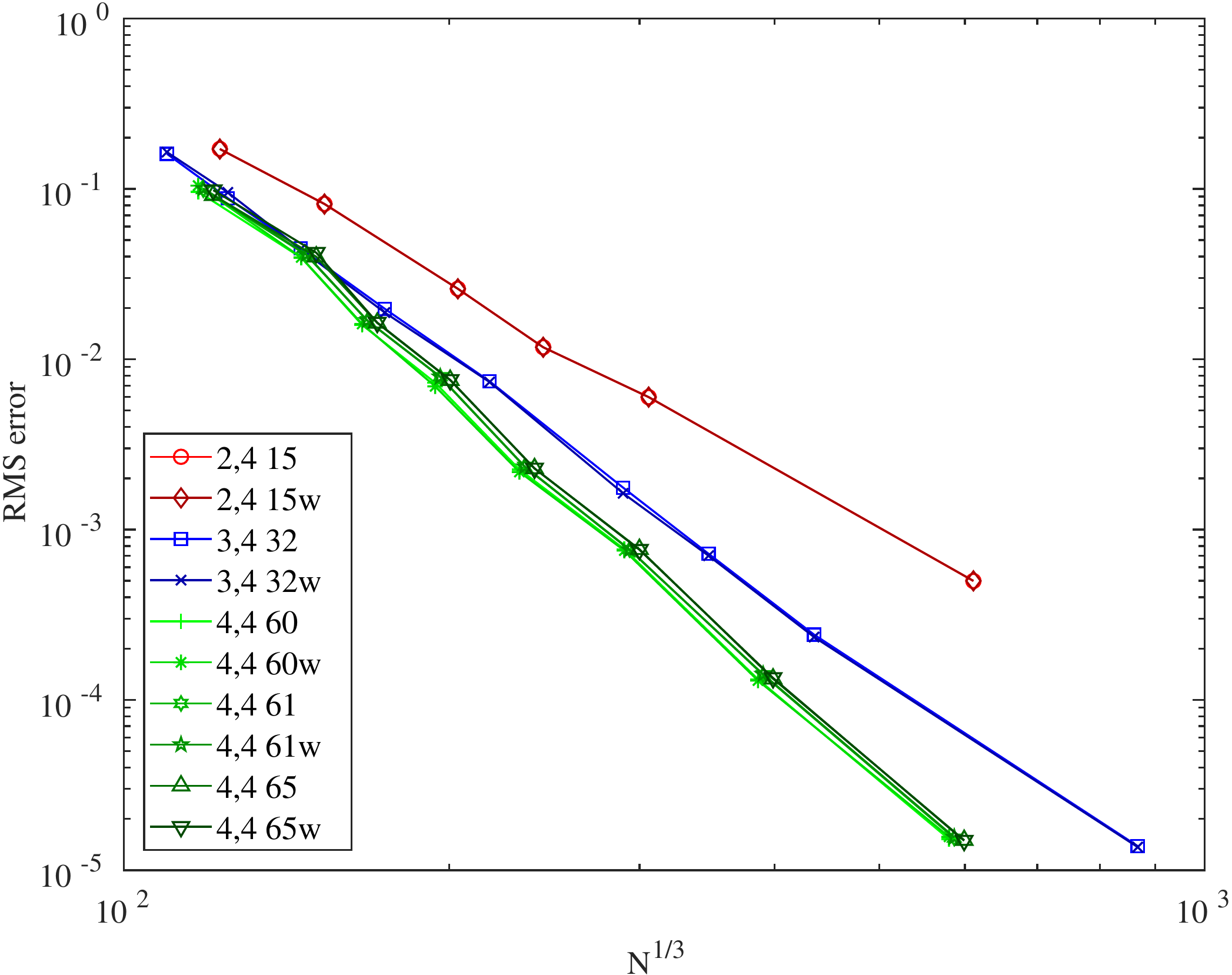}
\end{subfigure} \,\,
\begin{subfigure}[b]{0.45\textwidth}
  \includegraphics[width=\textwidth]{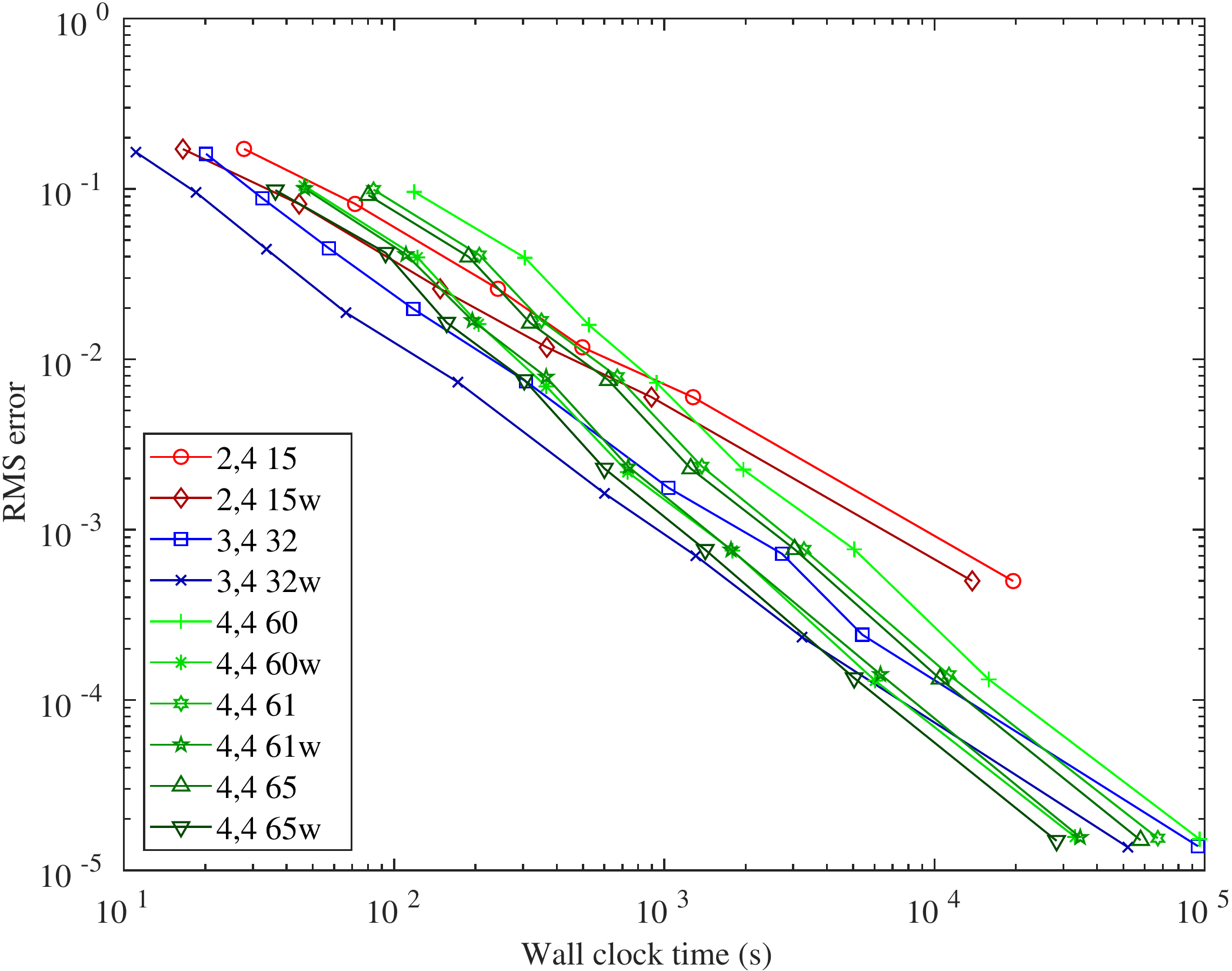}
\end{subfigure}
\caption{RMS errors for the elastic test case as a function of the cube root of the number of scalar degrees of freedom (left) and as a function of the wall clock time (right). In the legend, [$p,K\;n$] refers to the element of degree $p$ with $n$ nodes, combined with an order-$K$ time-stepping scheme. Suffix $w$ denotes elements for which the stiffness matrix was evaluated using the quadrature-based approach described in Section \ref{sec:alg}, while for the other elements we used the exact-integral algorithm.}
\label{fig:rmsEl}
\end{figure}

\begin{table}[h]
\caption{Results for the elastic test case, showing number of scalar degrees of freedom $N$, number of time steps $N_{\Delta t}$, wall clock time, and RMS error of the degree-$p$ $n$-node mass-lumped finite element method with stiffness matrix evaluation using exact integration ($n'$ = -), a new quadrature rule from this paper ($n'$ bold), or a degree-$(p+p'-2)$ accurate rule taken from \cite{zhang09}, with $n'$ the number of quadrature points.}
\label{tab:rmsEl}
\begin{center}
\begin{tabular}{c c c |r r r r}
$p$ 	& $n$ 	& $n'$ 		& $N$ 		& $N_{\Delta t}$	& time (s) 	& RMS error \\ \hline\hline
2 	& 15 		& -	 		& $24.6 \times10^{6}$	& 366 	& 988	& $8.24 \times10^{-3}$ \\
 	&	 	& 14	 		& 					& 371	& 750	& $8.24 \times10^{-3}$ \\ \hline
3 	& 32 		& -	 		& $10.4 \times10^{6}$ 	& 338	& 307	& $7.41 \times10^{-3}$ \\
 	&	 	& \textbf{21}	&  					& 327	& 172 	& $7.34 \times10^{-3}$ \\
 	&	 	& 24			&  					& 336	& 204	& $7.39 \times10^{-3}$ \\ \hline
4 	& 60 		& -	 		& $7.3 \times10^{6}$ 	& 744 	& 725	& $7.27 \times10^{-3}$ \\
 	&	 	& \textbf{51}	&  					& 697	& 366 	& $6.93 \times10^{-3}$ \\
 	&	 	& 59			&  					& 723	& 407	& $7.34 \times10^{-3}$ \\ \hline
4 	& 61 		& -	 		& $7.6 \times10^{6}$ 	& 631 	& 670	& $7.89 \times10^{-3}$ \\
 	&	 	& \textbf{60}	&  					& 614	& 365	& $7.87 \times10^{-3}$ \\
 	&	 	& 79			&  					& 614	& 471 	& $7.88 \times10^{-3}$ \\ \hline
4 	& 65 		& -	 		& $8.0 \times10^{6} $	& 568 	& 621	& $7.54 \times10^{-3}$ \\
 	&	 	& \textbf{60}	&  					& 553	& 303 	& $7.56 \times10^{-3}$ \\
 	&	 	& 79			&  					& 553	& 388 	& $7.99 \times10^{-3}$
\end{tabular}
\end{center}
\end{table}

We tested the methods on multiple unstructured meshes and simulated over the time interval $(-0.3,0.6)\;$s, using the time-stepping algorithm as in the previous test case and omitting the initial time steps where the magnitude of the wavelet is smaller than $10^{-16}$. Simulations were also carried out in the same environment as in the previous test case. The RMS error is based on the errors at all receivers and for all directional components and is plotted against the cube root of the number of scalar degrees of freedom $N$ and elapsed time in Figure \ref{fig:rmsEl}. Table \ref{tab:rmsEl} also lists the wall clock time and number of time steps for simulations with a RMS error of around $8\times 10^{-3}$ and includes results of simulations where a degree-$(p'+p-2)$ accurate quadrature rule taken from \cite{zhang09} is used, with $p'$ the highest polynomial degree of the enriched element space. The left graph of Figure \ref{fig:rmsEl} confirms that the methods converge with optimal order. It also shows that there is hardly any difference in accuracy between using the quadrature-rule approach or exact integration algorithm for evaluating the stiffness matrix. Table \ref{tab:rmsEl} and the right graph of Figure \ref{fig:rmsEl} show that for the degree-3 and degree-4 elements, the quadrature-based algorithm reduces the computational cost by more than a factor 1.5, while for the degree-2 element, this algorithm also results in a moderate speed up. Furthermore, Table \ref{tab:rmsEl} also illustrates that the new quadrature rules presented in this paper are more efficient than those currently available in the literature.

\section{Conclusion}
\label{sec:conclusion}
We presented new and efficient quadrature rules for evaluating the stiffness matrices of mass-lumped tetrahedral elements for wave propagation modelling. These quadrature rules can significantly reduce the number of computations compared to algorithms that evaluate the stiffness matrix using exact integration, and can handle spatial parameters that vary within the element without loss of the optimal convergence rate. Obtaining these quadrature rules is not trivial, since degree-$p$ mass-lumped tetrahedral element spaces contain, apart from polynomials up to degree $p$, numerous additional higher-degree bubble functions when $p\geq 2$. To obtain efficient quadrature rules, we therefore carefully analyzed the stability and accuracy requirements needed to maintain optimal convergence rates. The resulting conditions are presented in this paper, and we prove that, if these conditions are met, the resulting method can maintain an optimal order of convergence, even when the spatial parameters vary within the element. We found quadrature rules that satisfy these conditions for recently developed mass-lumped tetrahedral elements of degrees two to four. 

For the degree-2 element, the quadrature rule with the least number of points we could find was the degree-5 accurate 14-point quadrature rule of \cite{grundmann78}, but for the degree-3 and degree-4 elements, we found new quadrature rules that require significantly less integration points than those currently available. A dispersion analysis shows that by using these quadrature rules, the accuracy and largest allowed time step size remain nearly the same. Several numerical examples also illustrate the accuracy and efficiency of the quadrature-based approach and its superiority to evaluating the integrals for the stiffness matrix exactly. In particular, the quadrature-based approach results in a computational speed-up of around a factor 1.5 in case of elastic waves. Furthermore, in case of a heterogeneous domain with spatial parameters that vary within the element, the quadrature-based approach results in optimal convergence rates, while exact integration combined with a piecewise constant approximation of the spatial parameters results in a convergence rate of at most order two.

\bibliographystyle{abbrv}
\bibliography{MLFEM}

\begin{thebibliography}{10}

\bibitem{aki80}
K.~Aki and P.~G. Richards.
\newblock {Quantitative Seismology, Theory and Methods, Vol. 1}.
\newblock {\em New York}, 1980.

\bibitem{boffi10}
D.~Boffi.
\newblock Finite element approximation of eigenvalue problems.
\newblock {\em Acta Numerica}, 19:1--120, 2010.

\bibitem{chin99}
M.~J.~S. Chin-Joe-Kong, W.~A. Mulder, and M.~Van~Veldhuizen.
\newblock Higher-order triangular and tetrahedral finite elements with mass
  lumping for solving the wave equation.
\newblock {\em Journal of Engineering Mathematics}, 35(4):405--426, 1999.

\bibitem{ciarlet78}
P.~G. Ciarlet.
\newblock {\em The finite element method for elliptic problems}.
\newblock North-Holland Publishing Company, Amsterdam, New York, Oxford, 1978.

\bibitem{cohen95}
G.~Cohen, P.~Joly, and N.~Tordjman.
\newblock Higher order triangular finite elements with mass lumping for the
  wave equation.
\newblock In {\em Proceedings of the Third International Conference on
  Mathematical and Numerical Aspects of Wave Propagation}, pages 270--279. SIAM
  Philadelphia, 1995.

\bibitem{cohen02}
G.~C. Cohen.
\newblock {\em Higher-order numerical methods for transient wave equations}.
\newblock Springer, 2002.

\bibitem{cohen01}
G.~C. Cohen, P.~Joly, J.~E. Roberts, and N.~Tordjman.
\newblock Higher order triangular finite elements with mass lumping for the
  wave equation.
\newblock {\em SIAM Journal on Numerical Analysis}, 38(6):2047--2078, 2001.

\bibitem{cools03}
R.~Cools.
\newblock An encyclopaedia of cubature formulas.
\newblock {\em Journal of Complexity}, 19(3):445--453, 2003.

\bibitem{cui17}
T.~Cui, W.~Leng, D.~Lin, S.~Ma, and L.~Zhang.
\newblock High order mass-lumping finite elements on simplexes.
\newblock {\em Numerical Mathematics: Theory, Methods and Applications},
  10(2):331--350, 2017.

\bibitem{dablain86}
M.~Dablain.
\newblock The application of high-order differencing to the scalar wave
  equation.
\newblock {\em Geophysics}, 51(1):54--66, 1986.

\bibitem{geevers18a}
S.~Geevers, W.~A. Mulder, and J.~J.~W. van~der Vegt.
\newblock Dispersion properties of explicit finite element methods for wave
  propagation modelling on tetrahedral meshes.
\newblock {\em Journal of Scientific Computing}, 77(1):372--396, 2018.

\bibitem{geevers18b}
S.~Geevers, W.~A. Mulder, and J.~J.~W. van~der Vegt.
\newblock New higher-order mass-lumped tetrahedral elements for wave
  propagation modelling.
\newblock {\em SIAM Journal on Scientific Computing}, 40(5):A2801--A2829, 2018.

\bibitem{grundmann78}
A.~Grundmann and H.~M. M{\"o}ller.
\newblock Invariant integration formulas for the n-simplex by combinatorial
  methods.
\newblock {\em SIAM Journal on Numerical Analysis}, 15(2):282--290, 1978.

\bibitem{keast86}
P.~Keast.
\newblock Moderate-degree tetrahedral quadrature formulas.
\newblock {\em Computer Methods in Applied Mechanics and Engineering},
  55(3):339--348, 1986.

\bibitem{komatitsch98}
D.~Komatitsch and J.~P. Vilotte.
\newblock The spectral element method: an efficient tool to simulate the
  seismic response of 2{D} and 3{D} geological structures.
\newblock {\em Bulletin of the Seismological Society of America},
  88(2):368--392, 1998.

\bibitem{liu17}
Y.~Liu, J.~Teng, T.~Xu, and J.~Badal.
\newblock Higher-order triangular spectral element method with optimized
  cubature points for seismic wavefield modeling.
\newblock {\em Journal of Computational Physics}, 336:458--480, 2017.

\bibitem{mulder96}
W.~A. Mulder.
\newblock A comparison between higher-order finite elements and finite
  differences for solving the wave equation.
\newblock In {\em Proceedings of the Second ECCOMAS Conference on Numerical
  Methods in Engineering}, pages 344--350. John Wiley \& Sons, 1996.

\bibitem{mulder13}
W.~A. Mulder.
\newblock New triangular mass-lumped finite elements of degree six for wave
  propagation.
\newblock {\em Progress in Electromagnetics Research PIER,(141)}, pages
  671--692, 2013.

\bibitem{mulder16}
W.~A. Mulder and R.~Shamasundar.
\newblock Performance of continuous mass-lumped tetrahedral elements for
  elastic wave propagation with and without global assembly.
\newblock {\em Geophysical Journal International}, 207(1):414--421, 2016.

\bibitem{patera84}
A.~T. Patera.
\newblock A spectral element method for fluid dynamics: laminar flow in a
  channel expansion.
\newblock {\em Journal of Computational Physics}, 54(3):468--488, 1984.

\bibitem{seriani94}
G.~Seriani and E.~Priolo.
\newblock Spectral element method for acoustic wave simulation in heterogeneous
  media.
\newblock {\em Finite elements in analysis and design}, 16(3-4):337--348, 1994.

\bibitem{stroud71}
A.~H. Stroud.
\newblock Approximate calculation of multiple integrals.
\newblock {\em Prentice-Hall, Englewood Cliffs, NJ}, 1971.

\bibitem{witherden15}
F.~D. Witherden and P.~E. Vincent.
\newblock On the identification of symmetric quadrature rules for finite
  element methods.
\newblock {\em Computers \& Mathematics with Applications}, 69(10):1232--1241,
  2015.

\bibitem{zhang09}
L.~Zhang, T.~Cui, and H.~Liu.
\newblock A set of symmetric quadrature rules on triangles and tetrahedra.
\newblock {\em Journal of Computational Mathematics}, 27(1):89--96, 2009.

\bibitem{zhebel14}
E.~Zhebel, S.~Minisini, A.~Kononov, and W.~A. Mulder.
\newblock A comparison of continuous mass-lumped finite elements with finite
  differences for 3-{D} wave propagation.
\newblock {\em Geophysical Prospecting}, 62(5):1111--1125, 2014.

\end{thebibliography}

\end{document}